\documentclass[12pt]{amsart}
\usepackage{latexsym, amsmath, amssymb, amsthm, mathrsfs}
\usepackage[alwaysadjust]{paralist}
\usepackage{caption}
\usepackage{subcaption}
\usepackage[T1]{fontenc}
\usepackage{lmodern}
\usepackage[backref,colorlinks=true,linkcolor=blue,urlcolor=blue, citecolor=blue]%
  {hyperref}
\usepackage[shortalphabetic]{amsrefs}
\usepackage[all]{xy}
\usepackage[T1]{fontenc}
\usepackage{mathptmx}
\usepackage{microtype}
\usepackage{framed}
\usepackage{tikz}
\usepackage{graphicx}

\usepackage[centering, includeheadfoot, hmargin=1in, vmargin=1in,
  headheight=30.4pt]{geometry}
\newtheorem{lemma}{Lemma}[section]
\newtheorem{theorem}[lemma]{Theorem}
\newtheorem{corollary}[lemma]{Corollary}
\newtheorem{proposition}[lemma]{Proposition}
\theoremstyle{definition}
\newtheorem{definition}[lemma]{Definition}
\newtheorem{remark}[lemma]{Remark}
\newtheorem{example}[lemma]{Example}

\newtheorem*{theoremA}{Theorem A}
\newtheorem*{theoremB}{Theorem B}
\newtheorem*{theoremC}{Theorem C}

\renewcommand{\theequation}%
{\arabic{section}.\arabic{lemma}.\arabic{equation}}

\newcommand{\CC}{\ensuremath{\mathbb{C}}} 
\newcommand{\NN}{\ensuremath{\mathbb{N}}} 
\newcommand{\PP}{\ensuremath{\mathbb{P}}} 
\newcommand{\QQ}{\ensuremath{\mathbb{Q}}} 
\newcommand{\RR}{\ensuremath{\mathbb{R}}} 
 
 
\newcommand{\sI}{\ensuremath{\kern -1pt \mathscr{I}\kern -2pt}} 
\newcommand{\sJ}{\ensuremath{\kern -2pt \mathscr{J}\kern -2pt}} 
\newcommand{\sO}{\ensuremath{\mathscr{O}}} 
\newcommand{\sU}{\ensuremath{\mathscr{U}}}

\renewcommand{\geq}{\geqslant}
\renewcommand{\leq}{\leqslant}

\DeclareMathOperator{\mult}{mult}

\DeclareMathOperator{\Supp}{Supp}
\DeclareMathOperator{\ord}{ord}


\newcommand{\equ}{\ensuremath{\,=\,}}
\newcommand{\deq}{\ensuremath{\stackrel{\textrm{def}}{=}}}

\newcommand{\st}[1]{\ensuremath{\left\{ #1 \right\} }}
\DeclareMathOperator{\Bbig}{Big}
\DeclareMathOperator{\Null}{Null}
\newcommand{\ybul}{\ensuremath{Y_\bullet}}
\newcommand{\dgeq}{\ensuremath{\,\geq\,}}  
\newcommand{\Bplus}{\ensuremath{\textbf{\textup{B}}_{+} }}
\newcommand{\Bminus}{\ensuremath{\textbf{\textup{B}}_{-} }}
\newcommand{\Bstable}{\ensuremath{\textbf{\textup{B}} }}
\newcommand{\dleq}{\ensuremath{\,\leq\,}} 
\newcommand{\origin}{\ensuremath{\textup{\textbf{0}}}}
\newcommand{\ei}{\ensuremath{\textup{\textbf{e}}_i}}
\newcommand{\eone}{\ensuremath{\textup{\textbf{e}}_1}}
\newcommand{\en}{\ensuremath{\textup{\textbf{e}}_n}}
\newcommand{\dsubseteq}{\ensuremath{\,\subseteq\,}}

\begin{document}

\title[Positivity and Newton--Okounkov bodies]{Positivity of line bundles and Newton-Okounkov bodies}

\author[A.~K\" uronya]{Alex K\" uronya}
\author[V.~Lozovanu]{Victor Lozovanu}

\address{Alex K\"uronya, Johann-Wolfgang-Goethe Universit\"at Frankfurt, Institut f\"ur Mathematik, Robert-Mayer-Stra\ss e 6-10., D-60325
Frankfurt am Main, Germany}
\address{Budapest University of Technology and Economics, Department of Algebra, Egry J\'ozsef u. 1., H-1111 Budapest, Hungary}
\email{{\tt kuronya@math.uni-frankfurt.de}}

\address{Victor Lozovanu, Universit\'a degli Studi di Milano--Bicocca, Dipartimento di Matematica e Applicazioni, 
Via Cozzi 53,, I-20125 Milano, Italy}
\email{{\tt victor.lozovanu@unimib.it}}

\maketitle


\section*{Introduction}

The aim  of this work is to characterize positivity (both local and global) of line bundles on complex projective varieties in terms of convex geometry 
via the theory of Newton--Okounkov bodies. We will provide descriptions of ample and nef divisors, and discuss the relationship between Newton--Okounkov 
bodies and Nakayama's $\sigma$-decomposition. 

Based on earlier ideas of Khovanskii's Moscow school and motivated by the  work of Okounkov \cite{Ok}, Kaveh--Khovanskii \cite{KKh} and Lazarsfeld--Musta\c t\u a \cite{LM} introduced Newton--Okounkov bodies to projective geometry, where they have been an object of interest ever since. Essentially, a refined book-keeping device encoding the orders of vanishing along subvarieties of the ambient space $X$, they provide a general framework for the study of the asymptotic behaviour of line bundles on projective varieties. 

The construction that leads to Newton--Okounkov bodies associates to a line bundle (or more generally, an $\RR$-Cartier divisor) on an $n$-dimensional variety a collection of compact convex bodies  $\Delta_{\ybul}(D)\subseteq\RR^n$ parametrized by certain  complete flags $\ybul$ of subvarieties. Basic properties of these  have been determined 
\cites{AKL,Bou1,LM}, and their behaviour on surfaces  \cites{KLM1,LM, LSS} and toric varieties  \cites{LM,PSU} has been discussed at length. We refer the reader to the 
above-mentioned sources for background information. 

A distinguishing property of the notion is that it provides a set of 'universal numerical invariants', since a result of Jow \cite{Jow} shows that for Cartier divisors $D$ 
and $D'$, $D$ is numerically equivalent to $D'$ precisely if the associated functions 
\[
 \text{Admissible flags $\ybul$ in $X$}\ \stackrel{\Delta_{\ybul}(D)}{\longrightarrow}\  \text{Convex bodies in $\RR^n$}
\]
agree. 

Turning this principle into practice, one can expect to be able to read off all sorts of numerical invariants of Cartier divisors --- among them asymptotic invariants 
like the volume or Seshadri constants --- from the set of Newton--Okounkov bodies of $D$. On the other hand,  questions about global properties of the divisor might arise; whether one can determine ampleness or 
nefness of a given divisor in terms of its Newton--Okounkov bodies. As we will see, the answer is affirmative.

Localizing this train of thought, local positivity of a divisor $D$ at a point $x\in X$ will be  determined by the function 
\[
 \text{Admissible flags centered at $x$} \ \stackrel{\Delta_{\ybul}(D)}{\longrightarrow}\ \text{Convex bodies in $\RR^n$}\ .
\]
In particular, one can aim at deciding containment of $x$ in various asymptotic base loci, or compute measures of local positivity in terms of these convex sets. 

In fact the authors have carried out the suggested analysis in the case of smooth surfaces \cite{KL}, where the answer turned out to be surprisingly complete. The current article can be rightly considered as a higher-dimensional generalization of \cite{KL}. 

In search for a possible connection between Newton--Okounkov bodies and positivity, let us start with the toy example of projective curves. For an $\RR$-Cartier 
divisor $D$ on a smooth projective curve $C$, one has 
\begin{eqnarray*}
 D \text{ nef } \Leftrightarrow \deg_C D \dgeq 0 & \Leftrightarrow & 0 \in \Delta_{P}(D) \text{ for some/any point $P\in C$}\ ,\\
 D \text{ ample } \Leftrightarrow \deg_C D \,>\,  0 & \Leftrightarrow & \Delta_\lambda \subseteq \Delta_{P}(D) \text{ for some/any point $P\in C$}, 
\end{eqnarray*}
where $\Delta_{\lambda}:=[0,\lambda]$ for some real number $\lambda>0$. 

Interestingly enough, the observation just made generalizes in its entirety for smooth projective surfaces. Namely, one has the following \cite{KL}*{Theorem A}: 
for a big $\RR$-divisor $D$ on a smooth projective surface $X$
\begin{eqnarray*}
\text{ $D$ is nef} &   \Leftrightarrow & \text{  for all $x\in X$ there exists a flag $(C,x)$ such that $(0,0)\in\Delta_{(C,x)}(D)$ }, \\
\text{  $D$ is ample} &  \Leftrightarrow & \text{for all $x\in X$  there exists a flag $(C,x)$ and $\lambda >0$ such that $\Delta_{\lambda}\subseteq \Delta_{(C,x)}(D)$ \  
}
\end{eqnarray*}
where $\Delta_{\lambda}$ denotes the standard full-dimensional simplex of size $\lambda$ in $\RR^2$. In higher dimensions we will also denote by $\Delta_{\lambda}\subseteq\RR^n$ the standard simplex of length $\lambda$. 

Our first  results are local versions of  the analogous statements in higher dimensions. 

\begin{theoremA}
Let $D$ be a big $\RR$-divisor on a smooth projective variety $X$ of dimension $n$, let $x\in X$. Then the following are equivalent.  
\begin{enumerate}
 \item $x\not\in \Bminus(D)$.
 \item There exists an admissible flag $\ybul$ on $X$ centered at $x$ such that the origin $\origin\in \Delta_{\ybul}(D)\subseteq\RR^n$.
 \item The origin $\origin\in \Delta_{\ybul}(D)$ for every admissible flag $\ybul$ on $X$ centered at $x\in X$.
\end{enumerate}
\end{theoremA}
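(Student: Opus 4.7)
The plan is to verify the cycle $(3)\Rightarrow(2)\Rightarrow(1)\Rightarrow(3)$. The first implication requires no argument.

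For $(2)\Rightarrow(1)$, the key is a local comparison between the flag valuation and the multiplicity at $x$. Choosing analytic coordinates $z_1,\dots,z_n$ at $x$ adapted to $\ybul$ (so that $Y_i=\{z_1=\cdots=z_i=0\}$ in a neighborhood of $x$), the vector $\nu_{\ybul}(s)\in\ZZ^n$ is precisely the lex-minimal exponent appearing in the power-series expansion of $s$ at $x$, and therefore
\[
 \mult_x(s) \dleq \nu_{\ybul,1}(s) + \cdots + \nu_{\ybul,n}(s).
\]
Given $\origin\in\Delta_{\ybul}(D)\subseteq\RR^n_{\geq 0}$, an application of Carath\'eodory's theorem to the closed convex hull appearing in the definition of the Newton--Okounkov body produces a sequence of sections $s_k\in H^0(X,m_kD)$ with $\nu_{\ybul}(s_k)/m_k\to\origin$. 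The inequality above then yields $\mult_x(s_k)/m_k\to 0$, and sub-additivity of $m\mapsto\min_{s\in H^0(mD)}\mult_x(s)$ (via multiplication of sections) combined with Fekete's lemma forces the asymptotic multiplicity $\mult_x(\|D\|)$ to vanish. On the other hand, the Nakayama identity $\Bminus(D)=\Supp(N_\sigma(D))$ valid for big $\RR$-divisors implies that any $y\in\Bminus(D)$ lies on a prime divisor $E$ with $\sigma_E(D)>0$, so that $\mult_y(s)\geq\ord_E(s)\geq\sigma_E(D)\cdot m$ for every $s\in H^0(mD)$. Hence $\mult_x(\|D\|)=0$ forces $x\not\in\Bminus(D)$.

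For $(1)\Rightarrow(3)$, fix an ample divisor $A$ on $X$ and any admissible flag $\ybul$ centered at $x$. Since
\[
 \Bminus(D) \equ \bigcup_{\epsilon>0}\Bstable(D+\epsilon A),
\]
the hypothesis $x\not\in\Bminus(D)$ provides, for each small rational $\epsilon>0$, an integer $m>0$ with $m(D+\epsilon A)$ Cartier and a section $s\in H^0(X,m(D+\epsilon A))$ with $s(x)\neq 0$; such a section has $\nu_{\ybul}(s)=\origin$, so $\origin\in\Delta_{\ybul}(D+\epsilon A)$. The Hausdorff continuity of $D\mapsto\Delta_{\ybul}(D)$ on the big cone (Lazarsfeld--Musta\c t\u a) then yields $\Delta_{\ybul}(D+\epsilon A)\to\Delta_{\ybul}(D)$ as $\epsilon\to 0^+$, and since the origin lies in each of the closed bodies on the left it also lies in the closed limit.

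The two substantial inputs --- namely, $\Bminus(D)=\Supp(N_\sigma(D))$ for big $\RR$-divisors and Hausdorff continuity of Newton--Okounkov bodies on the big cone --- are both classical, so once assembled the proof is structurally short. The delicate point to be handled is in $(1)\Rightarrow(3)$, where one must arrange $\epsilon\in\QQ$ and $m$ divisible enough so that $m(D+\epsilon A)$ is genuinely integral when $D$ is only $\RR$-Cartier.
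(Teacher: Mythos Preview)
Your argument for $(1)\Rightarrow(3)$ is essentially the paper's: perturb by a small ample, pick up a section not vanishing at $x$, and pass to the limit via the nested intersection $\Delta_{\ybul}(D)=\bigcap_\epsilon\Delta_{\ybul}(D+\epsilon A)$. Fine.

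The implication $(2)\Rightarrow(1)$, however, has a genuine gap. You invoke ``the Nakayama identity $\Bminus(D)=\Supp(N_\sigma(D))$'', but this is false in general: Nakayama's result (see \cite{Nak}*{III.1.4, V.1.3}, restated as Theorem~\ref{prop:Nakayama}(1)) says only that $\Supp(N_\sigma(D))$ equals the \emph{divisorial part} of $\Bminus(D)$. The restricted base locus can --- and in dimension $\geq 3$ typically does --- have irreducible components of codimension $\geq 2$ not contained in any divisorial component. For such a point $y\in\Bminus(D)$ your inequality $\mult_y(s)\geq\sigma_E(D)\cdot m$ is vacuous (there is no such $E$ through $y$), and the argument collapses. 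What you actually need is \cite{ELMNP1}*{Proposition~2.8}: $\mult_x(\|D\|)=0\Leftrightarrow x\notin\Bminus(D)$ for big $\QQ$-divisors $D$. This is the input the paper uses, and it genuinely requires the analysis of asymptotic multiplier ideals in \cite{ELMNP1}; it does not follow from the divisorial Zariski decomposition alone.

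A second, more technical, issue: you run the entire argument for $(2)\Rightarrow(1)$ directly on the $\RR$-divisor $D$, speaking of sections $s_k\in H^0(X,m_kD)$ and of $\mult_x(\|D\|)$. Neither is defined when $D$ is not $\QQ$-Cartier, and the equivalent description of $\Delta_{\ybul}(D)$ for $\RR$-classes (Proposition~\ref{prop:definition}) produces effective $\RR$-divisors numerically equivalent to $D$, not sections. The paper handles this by first passing to nearby $\QQ$-classes $D+t_mA$ (choosing $t_m$ so that $D+t_mA$ is $\QQ$-Cartier), applying Proposition~\ref{prop:ELMNP} there, and then showing $\Bminus(D)=\bigcup_m\Bminus(D+t_mA)$. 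You flag exactly this issue for $(1)\Rightarrow(3)$ but overlook it for $(2)\Rightarrow(1)$, where it is equally present.
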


\begin{theoremB}
 With notation as above, the following are equivalent.
 \begin{enumerate}
  \item $x\not\in \Bplus(D)$.
  \item There exists an admissible flag $\ybul$ on $X$ centered at $x$ with $Y_1$ ample such that $\Delta_\lambda\subseteq \Delta_{\ybul(D)}$ for some positive real number
  $\lambda$. 
  \item For every admissible flag $\ybul$ on $X$ there exists a real number $\lambda>0$ for which $\Delta_\lambda\subseteq \Delta_{\ybul(D)}$.
 \end{enumerate}
\end{theoremB}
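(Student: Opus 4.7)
The plan is to close the loop $(3) \Rightarrow (2) \Rightarrow (1) \Rightarrow (3)$. The implication $(3) \Rightarrow (2)$ is immediate: since $X$ is projective one can find an admissible flag centered at $x$ whose first element $Y_1$ is a very ample divisor through $x$, and applying (3) to this flag gives (2). The substance of the proposition lies in the remaining two implications.

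For $(1) \Rightarrow (3)$ I would invoke the standard characterization of the augmented base locus: $x \notin \Bplus(D)$ if and only if there is a numerical decomposition $D \equiv A + E$ with $A$ an ample $\QQ$-divisor and $E$ an effective $\QQ$-divisor such that $x \notin \Supp(E)$. Fix any admissible flag $\ybul$ centered at $x$. Since the flag is centered at a point outside $\Supp(E)$, the defining section of $mE$ has $\nu_\ybul$-valuation zero, so multiplication by it yields an inclusion $\Delta_\ybul(A) \subseteq \Delta_\ybul(A+E) = \Delta_\ybul(D)$ (using the numerical invariance of Newton--Okounkov bodies). It then suffices to show that for any ample $\QQ$-divisor $A$ the body $\Delta_\ybul(A)$ contains a standard simplex $\Delta_\lambda$ for some $\lambda > 0$. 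For this, choose local parameters $z_1, \ldots, z_n$ at $x$ cutting out the flag, and use jet separation: for $m \gg 0$ the linear system $|mA|$ separates $k$-jets at $x$ with $k/m$ approaching $\epsilon(A;x) > 0$, so for every multi-index $(a_1, \ldots, a_n)$ with $\sum a_i \leq k$ one finds a section of $mA$ whose leading Taylor monomial is $z_1^{a_1}\cdots z_n^{a_n}$, hence with valuation exactly $(a_1, \ldots, a_n)$ along the flag. This gives $\Delta_{k/m} \subseteq \Delta_\ybul(A)$; passing to the limit yields $\Delta_{\epsilon(A;x)} \subseteq \Delta_\ybul(A)$.

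For $(2) \Rightarrow (1)$, let $\ybul$ be the given flag with $Y_1$ ample and $\Delta_\lambda \subseteq \Delta_\ybul(D)$. Choose a small positive rational $\epsilon < \lambda$, small enough that $\Bplus(D) = \Bstable(D - \epsilon Y_1)$ (a standard stabilization property of $\Bplus$ under perturbation by a fixed ample class). The shift property of Newton--Okounkov bodies along the first flag element gives
\[
\Delta_\ybul(D - \epsilon Y_1) \ \supseteq\ \bigl(\Delta_\ybul(D) \cap \st{v_1 \geq \epsilon}\bigr) - \epsilon\eone \ \supseteq\ \Delta_{\lambda - \epsilon},
\]
so in particular $\origin \in \Delta_\ybul(D - \epsilon Y_1)$. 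Theorem A applied to $D - \epsilon Y_1$ then yields $x \notin \Bminus(D - \epsilon Y_1)$. For any $\epsilon' \in (0, \epsilon)$ the divisor $(\epsilon - \epsilon')Y_1$ is ample, so $\Bstable(D - \epsilon' Y_1)$ appears as a term in the union defining $\Bminus(D - \epsilon Y_1)$, whence $\Bstable(D - \epsilon' Y_1) \subseteq \Bminus(D - \epsilon Y_1)$. Taking $\epsilon'$ also inside the stabilization range yields $x \notin \Bstable(D - \epsilon' Y_1) = \Bplus(D)$.

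I expect the main obstacle to be the quantitative jet-separation step in $(1) \Rightarrow (3)$: one needs sections of $mA$ whose valuation along the flag equals a prescribed multi-index, and the standard surjectivity of the jet map $H^0(mA) \to J^k_x(mA)$ has to be combined with a careful choice of local parameters adapted to the flag in order to produce sections realizing the prescribed Taylor leading monomial. A secondary technical point is the shift inclusion used in $(2) \Rightarrow (1)$, which amounts to verifying that division by a suitable power of a local equation of $Y_1$ transfers sections with $\nu_1$-valuation $\geq m\epsilon$ into genuine sections of $m(D - \epsilon Y_1)$ with the expected shifted valuation vector.
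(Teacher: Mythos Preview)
Your $(3)\Rightarrow(2)$ and $(2)\Rightarrow(1)$ are essentially the paper's: the latter also runs via the shift identity $\Delta_{\ybul}(D)_{\nu_1\geq\epsilon}=\Delta_{\ybul}(D-\epsilon Y_1)+\epsilon\eone$, Theorem~A, and $\Bplus(D)=\Bminus(D-\epsilon Y_1)$ for small $\epsilon>0$ (using that $Y_1$ is ample).

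The gap is in $(1)\Rightarrow(3)$, at ``hence with valuation exactly $(a_1,\ldots,a_n)$ along the flag''. Separating $k$-jets at $x$ controls the Taylor coefficients in total degree $\leq k$ but not the higher-order terms, while $\nu_{\ybul}$ returns the \emph{lexicographically} minimal exponent among \emph{all} nonzero terms in the expansion in $z_1,\ldots,z_n$; a monomial of large total degree can be lex-smaller than your target. For instance, prescribing the $k$-jet to be $z_1^k$ (aiming at $k\eone$) does not prevent a term $z_2^{k+1}$ in the tail, which already forces $\nu_1(s)=\ord_{Y_1}(s)=0$. Thus from jets alone you can only secure the vertices $\origin$ and $\tfrac{k}{m}\en$ of your simplex. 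In fact your conclusion $\Delta_{\epsilon(A;x)}\subseteq\Delta_{\ybul}(A)$ for \emph{every} flag at $x$ would give $\lambda(A;x)\geq\epsilon(A;x)$ for the largest simplex constant, whereas one has $\lambda(A;x)\leq\epsilon(A;x)$ with strict inequality in general already on surfaces.

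The paper's replacement is a flag-adapted construction rather than a point-based one: for $m\gg0$ it builds sections $s_0,\ldots,s_n\in H^0(X,\sO_X(mA))$ with $\nu_{\ybul}(s_0)=\origin$ and $\nu_{\ybul}(s_i)=\ei$ by choosing on $Y_i$ a section of $mA|_{Y_i}-Y_{i+1}$ not vanishing at $x$, pushing it into $H^0(Y_i,mA|_{Y_i})$, and lifting to $X$ via the surjection $H^0(X,mA)\to H^0(Y_i,mA|_{Y_i})$ coming from Serre vanishing; an auxiliary point $y\in Y_i\setminus Y_{i+1}$ is used to keep the lift from vanishing along the earlier $Y_j$. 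Tensoring with a section of $m(D-A)$ nonzero at $x$ (available since $x\notin\Bstable(D-A)$) then places $\origin,\tfrac{1}{m}\eone,\ldots,\tfrac{1}{m}\en$ in $\Delta_{\ybul}(D)$. Your reduction $\Delta_{\ybul}(A)\subseteq\Delta_{\ybul}(D)$ via $D\equiv A+E$ with $x\notin\Supp(E)$ is correct (allow $E$ to be an effective $\RR$-divisor when $D$ is); only the production of the simplex inside $\Delta_{\ybul}(A)$ requires this different mechanism.
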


These results will be proven below as Theorem~\ref{thm:main1}, and Theorem~\ref{thm:main2}, respectively. Making use of the connections between augmented/restricted 
base loci, we obtain the expected characterizations of nef/ample divisors as in Corollary~\ref{cor:nef} and \ref{cor:ample}. An interesting recent study of local positivity on surfaces was undertaken 
by Ro\'e \cite{Roe}, where the author introduces the concept of local numerical equivalence, based on the ideas developed in \cite{KL}.

Zariski decomposition is a basic tool in the theory of linear series on surfaces, which is largely responsible for the fact that Newton--Okounkov bodies are reasonably
well understood in dimension two; the polygonality of $\Delta_{\ybul}(D)$ in case of a smooth surface is a consequence of variation of Zariski decomposition \cite{BKS}
for instance (see \cite{KLM1}*{Section 2} for a discussion). 

Not surprisingly, the existence and uniqueness of Zariski decompositions is one of the main tools used in \cite{KL}.
Its relationship to Newton--Okounkov polygons on surfaces is particularly simple: if $D$ is a big $\RR$-divisor with the property that  the point $Y_2$ in the flag $\ybul$ is not contained in the support of the 
negative part of $D$, then $\Delta_{\ybul}(D)=\Delta_{\ybul}(P_D)$, where $P_D$ stands for the positive part of $D$. 

In dimensions three and above, the appropriate birational version of Zariski decomposition --- the so-called CKM decomposition --- only exists under fairly restrictive hypotheses, hence one needs substitutes 
whose existence is guaranteed while they still retain some of the favourable properties of the original notion. 

A widely accepted concept along these lines is Nakayama's divisorial Zariski decomposition or $\sigma$-decomposition, which exists for an arbitrary big $\RR$-divisor, but where the 'positive part' is only guaranteed
to be movable (see \cite{Nak}*{Chapter 3} or \cite{Bou2}). Extending the observation coming from dimension two, we obtain the following. 

\begin{theoremC}
Let $X$ be a smooth projective variety, $D$  a big $\RR$-divisor, $\Gamma$ a prime divisor,  
$Y_{\bullet}: Y_0=X\supseteq Y_1=\Gamma\supseteq\ldots  \supseteq Y_n=\{x\}$ and admissible flag on $X$. Then 
\begin{enumerate}
\item $\Delta_{Y_{\bullet}}(D) \ \subseteq \ (\sigma_{\Gamma}(D),0\ldots, 0)+\RR_+^n$,
\item $(\sigma_{\Gamma}(D),0\ldots, 0)\ \in \ \Delta_{Y_{\bullet}}(D)$, whenever $x\in \Gamma$ is a very general point.
\item $\Delta_{Y_{\bullet}}(D) = \nu_{Y_{\bullet}}(N_{\sigma}(D)) + \Delta_{Y_{\bullet}}(P_{\sigma}(D))$. Morever, $\Delta_{Y_{\bullet}}(D) =\Delta_{Y_{\bullet}}(P_{\sigma}(D))$, when $x\notin\textup{Supp}(N_{\sigma}(D))$.
\end{enumerate}
\end{theoremC}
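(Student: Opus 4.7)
The plan is to treat the three assertions separately, though all three ultimately rest on unwinding the definition of $\Delta_{Y_\bullet}(D)$ via global sections and exploiting Nakayama's asymptotic order of vanishing $\sigma_\Gamma(D)$. Part (1) is essentially a definition-chase: for any non-zero $s \in H^0(X, \lfloor mD \rfloor)$ the first coordinate of $\nu_{Y_\bullet}(s)$ is $\ord_\Gamma(s) \geq \ord_\Gamma(|mD|) \geq m\sigma_\Gamma(D)$ by the very definition of $\sigma_\Gamma(D)$, while the remaining coordinates are automatically non-negative. Dividing by $m$ and passing to the closed convex hull yields the inclusion $\Delta_{Y_\bullet}(D) \subseteq (\sigma_\Gamma(D), 0, \ldots, 0) + \RR_+^n$.

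For part (2), I would build a sequence of global sections $s_m$ whose normalized valuation vectors converge to $(\sigma_\Gamma(D), 0, \ldots, 0)$. Since $\sigma_\Gamma(D) = \lim_m \ord_\Gamma(|mD|)/m$, one can pick $s_m$ with $a_m := \ord_\Gamma(s_m)$ satisfying $a_m/m \to \sigma_\Gamma(D)$, and factor $s_m = t_\Gamma^{a_m} \cdot s'_m$ where $s'_m$ does not vanish identically on $\Gamma$. The remaining coordinates of $\nu_{Y_\bullet}(s_m)$ are determined by $s'_m|_\Gamma$ and its successive restrictions down the flag. To force these to vanish, we exploit that the zero locus $Z_m \subsetneq \Gamma$ of $s'_m|_\Gamma$ is proper closed; for $x$ very general in $\Gamma$, the point $x$ avoids the countable union $\bigcup_m Z_m$, so $s'_m|_\Gamma(x) \neq 0$ for every $m$. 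A straightforward induction along the flag then gives $\nu_{Y_\bullet}(s_m) = (a_m, 0, \ldots, 0)$, so $\frac{1}{m}\nu_{Y_\bullet}(s_m) \to (\sigma_\Gamma(D), 0, \ldots, 0)$, which therefore lies in $\Delta_{Y_\bullet}(D)$.

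For part (3), the decisive input is Nakayama's comparison of global sections: for sufficiently divisible $m$ (first in the $\QQ$-case, then extended via an $\RR$-approximation argument), multiplication by the canonical section cutting out $mN_\sigma(D)$ identifies $H^0(X, mP_\sigma(D))$ with $H^0(X, mD)$. Additivity of $\nu_{Y_\bullet}$ under products of sections then yields the Minkowski identity $\Delta_{Y_\bullet}(D) = \nu_{Y_\bullet}(N_\sigma(D)) + \Delta_{Y_\bullet}(P_\sigma(D))$ after normalizing and passing to closures. The final assertion is flag-theoretic: if $x \notin \textup{Supp}(N_\sigma(D))$, then since each $Y_i$ passes through $x$, no $Y_i$ lies in $\textup{Supp}(N_\sigma(D))$, and an inductive computation along the flag yields $\nu_{Y_\bullet}(N_\sigma(D)) = (0, \ldots, 0)$. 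The main obstacle is making the section comparison in part (3) rigorous for a genuine $\RR$-divisor: since the coefficients of $N_\sigma(D)$ need not be rational, one must track the integer parts $\lfloor mD \rfloor$, $\lfloor mN_\sigma(D) \rfloor$, $\lfloor mP_\sigma(D) \rfloor$ with care and pass to the limit, using that $\ord_{\Gamma_i}(|mD|)/m \to \sigma_{\Gamma_i}(D)$ for each component $\Gamma_i$ of $N_\sigma(D)$.
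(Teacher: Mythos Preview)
Your overall strategy is correct, and for parts~(1) and~(3) it matches the paper's in spirit. The main difference is that you work throughout with sections of $\lfloor mD\rfloor$, whereas the paper systematically uses the equivalent description of $\Delta_{Y_\bullet}(D)$ as the closed convex hull of $\{\nu_{Y_\bullet}(D')\,:\,D'\geq 0,\ D'\equiv D\}$ over effective $\RR$-divisors. This single change dissolves the obstacle you flag at the end: for~(1) one simply notes $\nu_1(D')=\mult_\Gamma(D')\geq\sigma_\Gamma(D)$ for every effective $D'\equiv D+\epsilon A$ and lets $\epsilon\to 0$; for~(3) the relevant input is Nakayama's result that $D'-N_\sigma(D)\geq 0$ for \emph{every} effective $\RR$-divisor $D'\equiv D$, which immediately gives both inclusions by additivity of $\nu_{Y_\bullet}$, with no floors to track.

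Your argument for~(2) is genuinely different from the paper's and is a nice alternative. The paper does not build explicit sections; instead it combines Nakayama's lemma $\sigma_\Gamma\bigl(D-\sigma_\Gamma(D)\Gamma\bigr)=0$ with Theorem~A: the vanishing of $\sigma_\Gamma$ forces $\Gamma\nsubseteq\Bminus\bigl(D-\sigma_\Gamma(D)\Gamma\bigr)$, so a very general $x\in\Gamma$ lies outside this restricted base locus, whence $\mathbf{0}\in\Delta_{Y_\bullet}\bigl(D-\sigma_\Gamma(D)\Gamma\bigr)$ by Theorem~A, and translating back gives $(\sigma_\Gamma(D),0,\ldots,0)\in\Delta_{Y_\bullet}(D)$. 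Your route is more elementary and self-contained (it does not invoke Theorem~A), at the cost of a ``very general'' condition that depends on a chosen sequence of sections rather than on the intrinsic locus $\Bminus$; the paper's route, in turn, illustrates how~(2) is a direct consequence of the $\Bminus$ characterization. As written, your version of~(2) again needs the effective-$\RR$-divisor description (or an approximation by $\QQ$-divisors) to make sense of ``$\ord_\Gamma(|mD|)$'' when $D$ is an $\RR$-divisor, but once reformulated in those terms it goes through: pick effective $D'_k\equiv D$ with $\mult_\Gamma(D'_k)\to\sigma_\Gamma(D)$, and for $x$ avoiding the countably many $\Supp(D'_k-\mult_\Gamma(D'_k)\Gamma)\cap\Gamma$ one gets $\nu_{Y_\bullet}(D'_k)=(\mult_\Gamma(D'_k),0,\ldots,0)$ exactly as you describe.
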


The organization of the paper goes as follows: Section 1 fixes notation, and collects some preliminary information about asymptotic base loci and
Newton--Okounkov bodies. Sections 2 and 3 are devoted to the respective
proofs of Theorems A and B, while Section 4 describes the relationship between  Newton--Okounkov bodies and  Nakayama's $\sigma$-decomposition. 
 
\smallskip 
\paragraph*{\bf Acknowledgements}  We are grateful for helpful discussions to S\'ebastian Boucksom, Lawrence Ein, John Christian Ottem, Mihnea Popa, 
and Stefano Urbinati. Parts of this work were done while the authors attended the MFO workshop on Newton--Okounkov bodies, the Summer School in Geometry at University of Milano--Bicocca, 
and the RTG Workshop on Newton--Okounkov bodies at the University of Illinois at Chicago. We would like to thank the organizers of these events for the 
opportunity (Megumi Harada, Kiumars Kaveh, Askold Khovanskii; Francesco Bastianelli, Roberto Paoletti; Izzet Coskun and Kevin Tucker). 
 
 Alex K\"uronya was partially supported by the DFG-Forschergruppe 790 ``Classification of Algebraic Surfaces and Compact Complex Manifolds'', 
by the DFG-Graduier\-ten\-kol\-leg 1821 ``Cohomological Methods in Geometry'', and by the OTKA grants 77476 and 81203 of the Hungarian Academy of 
Sciences.

\section{Notation and preliminaries}

\subsection{Notation}
For the duration of this work  let $X$ be a smooth complex projective variety of dimension $n$ and $D$ be a Cartier divisor on $X$. An admissible flag of subvarieties
\[
Y_{\bullet} \ : \ X=Y_0 \supseteq Y_1\supseteq \ldots \supseteq Y_{n-1}\supseteq Y_n=\{\textup{pt.}\},
\]
is a complete flag with the property  that each $Y_i$ is an irreducible subvariety of codimension $i$ and smooth at the point $Y_n$. 
For an arbitrary point  $x\in X$,  we say that $Y_{\bullet}$ is \textit{centered at} $x$ whenever $Y_n=x$. The associated Newton--Okounkov body will be denoted by
 $\Delta_{Y_{\bullet}}(D)\subseteq \RR^n_+$. 

\begin{remark}
 Not all of our results require $X$ to be smooth, at points it would suffice to require $X$ to be merely a projective variety. As a rule though, we will not keep track of 
  minimal hypotheses. 
\end{remark}

\subsection{Asymptotic base loci}

Stable  base loci are fundamental invariants of linear series, however, as their behaviour is somewhat erratic (they do not respect numerical equivalence of divisors
for instance), other alternatives were in demand. To remedy the situation, Nakamaye came up with the idea of studying stable base loci of small perturbations. Based on this,
the influential paper  \cite{ELMNP1} introduced new asymptotic notions, the  \textit{restricted} and \textit{augmented base loci} of a big divisor $D$. 

The restricted base locus of a big $\RR$-divisor $D$ is defined as 
\[
\Bminus(D) \deq  \bigcup_{A} \textbf{B}(D+A)\ ,
\]
where the union is over all ample $\QQ$-divisors $A$ on $X$. This locus turns out to be  a countable union of subvarieties of $X$ (and one really needs a countable
union on occasion, see \cite{Les}) via  \cite{ELMNP1}*{Proposition~1.19}
\[
\Bminus(D) \equ \bigcup_{m\in\NN}\textbf{B}(D+\frac{1}{m}A)\ .
\]
The augmented base locus of $D$ is defined to be 
\[
\Bplus(D)\deq  \bigcap_{A}\textbf{B}(D-A),
\]
where the intersection is taken over all ample $\QQ$-divisors $A$ on $X$. It follows quickly from \cite{ELMNP1}*{Proposition 1.5} that $\Bplus(D)=\textbf{B}(D-\frac{1}{m}A)$ for all $m>>0$ and any fixed ample class $A$.

Augmented and restricted base loci satisfy various favorable properties; for instance  both $\Bplus (D)$ and $\Bminus(D)$ depend only on the numerical 
class of $D$, hence  are much easier to study (see \cite{ELMNP1}*{Corollary 2.10} and \cite{PAGII}*{Example 11.3.12}).  

Below we make a useful  remark regarding augmented/restricted base loci. The statement must be well-known to experts, as usual, we include it with 
proof for the lack of a suitable reference. 

 \begin{proposition}\label{prop:openclosed}
Let $X$ be a  projective variety, $x\in X$ an arbitrary point. Then 
\begin{enumerate}
 \item $B_+(x) \deq \st{\alpha\in N^1(X)_\RR\mid x\in \Bplus(\alpha)} \dsubseteq N^1(X)_\RR$ is closed, 
 \item $B_-(x) \deq \st{\alpha\in N^1(X)_\RR\mid x\in \Bminus(\alpha)} \dsubseteq N^1(X)_\RR$ is open, 
\end{enumerate}
 both with respect to the metric topology of $N^1(X)_\RR$.  
\end{proposition}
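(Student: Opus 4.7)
The plan is to deduce both statements from a single monotonicity principle for the stable base locus: for any $\RR$-Cartier divisor $D$ and any ample $\RR$-Cartier class $\gamma$, one has $\textbf{B}(D+\gamma)\subseteq \textbf{B}(D)$. Combined with the characterizations $\Bplus(\alpha)=\textbf{B}(\alpha-\tfrac{1}{m}A)$ for $m\gg 0$ and $\Bminus(\alpha)=\bigcup_{m\in\NN}\textbf{B}(\alpha+\tfrac{1}{m}A)$ recorded earlier, this principle will let me compare $\Bplus(\alpha)$ and $\Bminus(\alpha)$ with their counterparts at a nearby class $\beta$ by absorbing $\beta-\alpha$ into a small ample perturbation.

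The first step is to establish the monotonicity lemma. Its nontrivial input is that every ample $\RR$-Cartier class is $\RR$-linearly equivalent to an effective $\RR$-divisor whose support avoids the given point $x$; I would produce such a representative by writing the class as a positive real combination $\sum r_i A_i$ of ample $\QQ$-Cartier classes, choosing $m_i\gg 0$ so that $m_iA_i$ is very ample, and taking general members $F_i\in|m_iA_i|$ missing $x$, so that $F:=\sum(r_i/m_i)F_i$ works. Granted this, if $D\sim_{\RR} E$ with $E$ effective avoiding $x$, then $D+\gamma\sim_{\RR}E+F$ is effective and misses $x$.

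For (1), assume $x\notin\Bplus(\alpha)$ and fix an ample $\QQ$-class $A$ together with $m_0\gg 0$ such that $x\notin \textbf{B}(\alpha-\tfrac{1}{m_0}A)$. Since the ample cone is open in $N^1(X)_\RR$, the class $\tfrac{1}{2m_0}A+(\beta-\alpha)$ remains ample for all $\beta$ in a small metric neighborhood of $\alpha$, so the rewriting $\beta-\tfrac{1}{2m_0}A = (\alpha-\tfrac{1}{m_0}A) + (\tfrac{1}{2m_0}A+(\beta-\alpha))$ combined with monotonicity gives $\textbf{B}(\beta-\tfrac{1}{2m_0}A)\subseteq \textbf{B}(\alpha-\tfrac{1}{m_0}A)$. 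Since $\Bplus(\beta)\subseteq \textbf{B}(\beta-\tfrac{1}{2m_0}A)$ by the definition of $\Bplus$, we conclude $x\notin\Bplus(\beta)$. Statement (2) is entirely symmetric: starting from $x\in \textbf{B}(\alpha+\tfrac{1}{m_0}A)$ and writing $\alpha+\tfrac{1}{m_0}A = (\beta+\tfrac{1}{2m_0}A) + (\tfrac{1}{2m_0}A+(\alpha-\beta))$ with ample second summand yields $x\in \textbf{B}(\beta+\tfrac{1}{2m_0}A)\subseteq \Bminus(\beta)$.

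The main obstacle I anticipate is the monotonicity lemma itself, specifically the production of an effective representative for an ample $\RR$-Cartier class missing a prescribed point, and keeping track of the distinction between $\RR$-linear and numerical equivalence when invoking the characterizations of $\Bplus$ and $\Bminus$ at the level of $\RR$-divisors. Once that is settled, the remaining arguments are formal consequences of the openness of the ample cone in $N^1(X)_\RR$.
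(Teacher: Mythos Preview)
Your argument is correct and follows essentially the same idea as the paper's: both exploit that adding a small ample class can only shrink base loci, and use the openness of the ample cone to absorb perturbations $\beta-\alpha$ into an ample summand. The presentational difference is that for part~(i) the paper quotes \cite{ELMNP1}*{Corollary 1.6} (the existence of a neighborhood $\mathscr{U}$ of $\alpha$ with $\Bplus(\beta)\subseteq\Bplus(\alpha)$ for $\beta\in\mathscr{U}$) as a black box and argues via sequences, whereas you unwind this by working directly with the stable base locus and your monotonicity lemma; for part~(ii) the paper phrases the monotonicity at the level of $\Bminus$ using an ample basis of $N^1(X)_\RR$, while you phrase it at the level of $\textbf{B}$. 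Your version is slightly more self-contained; the paper's is slightly shorter by outsourcing the key inclusion to \cite{ELMNP1}. The one point where you should be a little careful, as you note yourself, is that $\textbf{B}$ is not a numerical invariant: when passing from the numerical class $\beta$ to a divisor you should choose a representative of the form $D+\sum t_iA_i$ (with $D$ representing $\alpha$ and $A_i$ ample) so that the identity $\beta-\tfrac{1}{2m_0}A=(\alpha-\tfrac{1}{m_0}A)+(\text{ample})$ holds on the nose as $\RR$-divisors and not merely numerically.
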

 
\begin{remark}
We point out  that unlike required   in \cite{ELMNP1}, one does not need the normality assumption on $X$ for \cite{ELMNP1}*{Corollary 1.6} to hold.  
\end{remark}

\begin{proof}
$(i)$ First we deal with the case of augmented base loci. Observe that it suffices to prove that 
\[
 B_+(x) \cap \Bbig(X) \subseteq \Bbig(X)
\]
is closed, since the big cone is open in the N\'eron--Severi space. 

We will show that whenever $(\alpha_n)_{n\in\NN}$ is a sequence of big $\RR$-divisor classes in $B(x)$ 
converging to $\alpha\in \Bbig(X)$,  then $\alpha\in B(x)$ as well. 

By \cite{ELMNP1}*{Corollary 1.6}, the class $\alpha$ has a small open neighbourhood $\sU$ in the big cone for which
\[
 \beta\in \sU \ \Longrightarrow \ \textup{\textbf{B}}_+(\beta)\subseteq \Bplus(\alpha)\ .
\]
If $x\in \Bplus(\alpha_n)$ for infinitely many $n\in \NN$, then since $\alpha_n \in U$ for $n$ large, 
we also have $x\in \Bplus(\alpha)$. \\
$(ii)$ Let $\alpha\in N^1(X)_\RR$ be arbitrary, and fix an $\RR$-basis $A_1,\dots,A_\rho$ of $N^1(X)_\RR$ consisting of ample divisor classes. 
Observe that $x\in \Bminus(\alpha)$ implies that $x\in \Bminus(\alpha+t_0\sum_{i=1}^{\rho}A_i)$ for some $t_0>0$ thanks to the definition 
of the restricted base locus. 

Since subtracting ample classes cannot decrease $\Bminus$, it follows that $x\in \Bminus(\alpha)$ yields $x\in \Bminus (\gamma)$ for all classes
of the form $\alpha+t_0\sum_{i=1}^{\rho}A_i - \sum_{i=1}^{\rho}\RR_{\geq 0}A_i$, which certainly contains an open subset of $\alpha\in N^1(X)_\RR$.
\end{proof}

\subsection{Newton--Okounkov bodies}

We start with  a sligthly different definition of Newton--Okounkov bodies; it has  already appeared in print  in \cite{KLM1}, and although it is an 
immediate consequence of \cite{LM}, a complete proof was first given in \cite{Bou1}{Proposition 4.1}.

\begin{proposition}[Equivalent definition of Newton-Okounkov bodies]\label{prop:definition}
Let $\xi\in\textup{N}^1(X)_{\RR}$ be a big $\RR$-class and $Y_{\bullet}$ be an admissible flag on $X$. Then
\[
\Delta_{Y_{\bullet}}(\xi) \ = \ \textup{closed convex hull of\ }\{ \nu_{Y_{\bullet}}(D) \ | \ D\in \textup{Div}_{\geq 0}(X)_{\RR}, D\equiv \xi\},
\]
where the valuation $\nu_{Y_{\bullet}}(D)$, for an effective $\RR$-divisor $D$, is constructed inductively as in the case of integral divisors.
\end{proposition}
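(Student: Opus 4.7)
The plan is to establish the identity first when $\xi$ is the class of an integral big Cartier divisor $D$, where the two definitions of the Newton--Okounkov body coincide essentially tautologically, to extend to rational classes by homogeneity, and then to a general big $\RR$-class via the Hausdorff-continuity of $\Delta_{Y_{\bullet}}(\cdot)$ on the big cone proved in \cite{LM}. Write $\Delta'_{Y_{\bullet}}(\xi)$ for the right-hand side of the proposed identity; by construction it is a closed convex subset of $\RR^n_{\geq 0}$, and both sides scale homogeneously under positive rational multiples of $\xi$.

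For an integral big Cartier divisor $D$, the inclusion $\Delta_{Y_{\bullet}}(D) \subseteq \Delta'_{Y_{\bullet}}(D)$ is immediate from the Lazarsfeld--Musta\c t\u a construction: every generator $\tfrac{1}{m}\nu_{Y_{\bullet}}(s)$ of $\Delta_{Y_{\bullet}}(D)$ with $0 \neq s \in H^0(X, mD)$ coincides with $\nu_{Y_{\bullet}}(\tfrac{1}{m}\mathrm{div}(s))$, and $\tfrac{1}{m}\mathrm{div}(s)$ is an effective $\QQ$-divisor numerically equivalent to $D$. For the reverse direction, given an effective $\RR$-divisor $E \equiv D$, if $E$ is rational I clear denominators so that $kE$ is integral effective with $kE \equiv kD$; the canonical section of $\sO_X(kE)$ exhibits $\nu_{Y_{\bullet}}(kE)$ as a point of $\Delta_{Y_{\bullet}}(kE)$, and the numerical invariance of Newton--Okounkov bodies of big integral classes yields $\Delta_{Y_{\bullet}}(kE) = k\,\Delta_{Y_{\bullet}}(D)$, whence $\nu_{Y_{\bullet}}(E) \in \Delta_{Y_{\bullet}}(D)$. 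A genuine effective $\RR$-divisor is handled by perturbing its coefficients into positive rationals within the finite-dimensional span of its components and passing to the limit.

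To extend to an arbitrary big $\RR$-class $\xi$, I would approximate $\xi$ from inside the big cone by big rational classes $\xi_k \to \xi$, so that $\Delta_{Y_{\bullet}}(\xi_k) \to \Delta_{Y_{\bullet}}(\xi)$ in Hausdorff distance by \cite{LM}. The inclusion $\Delta'_{Y_{\bullet}}(\xi) \subseteq \Delta_{Y_{\bullet}}(\xi)$ is straightforward: any effective $\RR$-divisor $E \equiv \xi$ admits coefficient-perturbations to effective $\QQ$-divisors $E_m \to E$ whose classes $[E_m]$ are big rationals converging to $\xi$, so $\nu_{Y_{\bullet}}(E_m) \in \Delta_{Y_{\bullet}}([E_m])$ by the rational case, and Hausdorff-continuity yields $\nu_{Y_{\bullet}}(E) \in \Delta_{Y_{\bullet}}(\xi)$ in the limit. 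The reverse inclusion is the main technical obstacle, since effective representatives of the $\xi_k$ do not automatically produce effective representatives of $\xi$. The standard workaround is to choose the $\xi_k$ so that $\xi - \xi_k$ is ample: for a fixed ample class $H$ and rationals $\delta_k \to 0^+$, I select big rational $\xi_k$ with $\xi - \xi_k$ ample and of norm $O(\delta_k)$; then $\xi - \xi_k$ admits an effective $\RR$-representative $A_k$ with $\nu_{Y_{\bullet}}(A_k) \to \origin$, and for any $v \in \Delta_{Y_{\bullet}}(\xi)$ Hausdorff-continuity yields approximants $v_k \in \Delta_{Y_{\bullet}}(\xi_k) = \Delta'_{Y_{\bullet}}(\xi_k)$, after which replacing each $E \equiv \xi_k$ appearing in the convex combinations realising $v_k$ by $E + A_k \equiv \xi$ shifts each valuation by $\nu_{Y_{\bullet}}(A_k)$ (using additivity of $\nu_{Y_{\bullet}}$ on effective divisors), producing elements of $\Delta'_{Y_{\bullet}}(\xi)$ that converge to $v$ and so place $v$ in $\Delta'_{Y_{\bullet}}(\xi)$ by closedness.
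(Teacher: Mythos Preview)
The paper does not supply its own proof of this proposition: it states the result and attributes the first complete argument to Boucksom \cite{Bou1}*{Proposition 4.1}, noting that it had already appeared in \cite{KLM1} and is implicit in \cite{LM}. So there is no in-paper proof to compare against directly.

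Your argument is essentially correct and follows the standard route: numerical invariance of Newton--Okounkov bodies for big classes, homogeneity, Hausdorff continuity on the big cone, and the ample-perturbation trick to pass from rational to real classes. Two small points are worth tightening. First, your treatment of a genuinely irrational effective $E\equiv D$ in the integral case already relies on the rational case for \emph{nearby} rational classes $[E_m]\neq [D]$, so the logic is cleaner if you first establish $\nu_{Y_\bullet}(E)\in\Delta_{Y_\bullet}([E])$ for all effective $\QQ$-divisors $E$ with big class (via sections plus homogeneity), and only then run the perturbation; otherwise there is a mild circularity in the order of the steps. Second, in the final paragraph you can in fact take $\nu_{Y_\bullet}(A_k)=\origin$ rather than merely $\to\origin$: write the small ample class $\xi-\xi_k$ as a positive real combination of very ample integral classes and pick, for each, an effective member missing $Y_n$. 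This simplifies the limit. With these adjustments the proof is complete and matches the approach one finds in the cited references.
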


\begin{remark}
Just as in the case of the original definition of Newton--Okounkov bodies, it becomes a posteriori clear that valuation vectors $\nu_{\ybul}(D)$ form a dense subset 
of 
\[
 \textup{closed convex hull of }\{ \nu_{Y_{\bullet}}(D) \ | \ D\in \textup{Div}_{\geq 0}(X)_{\RR}, D\equiv \xi\}\ ,
\]
hence it would  suffice to take closure in Proposition~\ref{prop:definition}. 
\end{remark}

The description of Newton--Okounkov bodies above is often  more suitable to use than the original one. For example, the following statement follows immediately  from it.

\begin{proposition}\label{prop:compute}
Suppose $\xi$ is a big $\RR$-class and $Y_{\bullet}$ is an admissible flag on $X$. Then for any $t\in [0,\mu(\xi, Y_1))$, we have
\[
\Delta_{Y_{\bullet}}(\xi)_{\nu_1\geq t} \ = \ \Delta_{Y_{\bullet}}(\xi-tY_1)\ + t\eone,
\]
where $\mu(\xi,Y_1)=\sup \{\mu>0|\xi-\mu Y_1 \textup{ is big}\}$ and $\eone=(1,0,\ldots ,0)\in \RR^n$.
\end{proposition}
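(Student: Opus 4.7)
The plan is to establish both inclusions directly from the equivalent definition in Proposition~\ref{prop:definition}. For the inclusion $\Delta_{Y_\bullet}(\xi - tY_1) + t\eone \subseteq \Delta_{Y_\bullet}(\xi)_{\nu_1 \geq t}$, I would take any effective $\RR$-divisor $D' \equiv \xi - tY_1$ and set $D := D' + tY_1$; this is an effective $\RR$-divisor numerically equivalent to $\xi$. Since $\nu_{Y_\bullet}$ is built inductively by taking orders of vanishing and restricting, one checks directly that $\nu_{Y_\bullet}(D) = \nu_{Y_\bullet}(D') + t\eone$, whose first coordinate is $\geq t$. Taking closed convex hulls yields the desired inclusion.

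For the reverse inclusion, the first observation I would make is that the set
\[
M \ := \ \{\nu_{Y_\bullet}(D) \,:\, D \in \textup{Div}_{\geq 0}(X)_\RR, \, D\equiv \xi\}
\]
is already convex: its coordinates are orders of vanishing of successive restrictions, each linear in the divisor, so $\nu_{Y_\bullet}(\lambda_1 D_1 + \lambda_2 D_2) = \lambda_1\nu_{Y_\bullet}(D_1) + \lambda_2\nu_{Y_\bullet}(D_2)$ whenever $D_1, D_2$ are effective and $\lambda_1, \lambda_2 \geq 0$. Hence $\Delta_{Y_\bullet}(\xi) = \overline{M}$, and the required inclusion reduces to showing $\overline{M} \cap \{v_1 \geq t\} \subseteq \overline{M_t}$, where $M_t := M \cap \{v_1 \geq t\}$.

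For this I would use an approximation argument exploiting the hypothesis $t < \mu(\xi, Y_1)$. Since the big cone is open, there is $s > t$ with $\xi - sY_1$ still big; picking any effective representative $E$ of $\xi - sY_1$ produces a point $w := \nu_{Y_\bullet}(E + sY_1) \in M$ with $w_1 \geq s > t$ strictly. Given $v \in \overline{M}$ with $v_1 \geq t$, I would approximate $v$ by a sequence $v_k \in M$ and consider the convex combinations $v_k' := (1-\epsilon_k) v_k + \epsilon_k w$, which lie in $M$ by convexity. Choosing $\epsilon_k \to 0$ slowly enough compared to the possible deficit $(t - (v_k)_1)^+$ forces $(v_k')_1 \geq t$, so that $v_k' \in M_t$ and $v_k' \to v$; hence $v \in \overline{M_t}$. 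Finally, $\overline{M_t} = \Delta_{Y_\bullet}(\xi - tY_1) + t\eone$ via the bijection $D \leftrightarrow D - tY_1$ between effective representatives of $\xi$ with $\ord_{Y_1}(D) \geq t$ and effective representatives of $\xi - tY_1$.

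The main obstacle I anticipate is exactly this reverse inclusion: the slicing operation $(-)\cap\{v_1\geq t\}$ does not in general commute with taking closed convex hulls, so one truly needs both the convexity of $M$ and the existence of a witness point $w \in M$ whose first coordinate strictly exceeds $t$ --- the latter being precisely where the hypothesis $t < \mu(\xi, Y_1)$ is used.
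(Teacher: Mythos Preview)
Your proof is correct and is exactly the argument the paper has in mind: the paper merely asserts that the proposition is immediate from Proposition~\ref{prop:definition} and gives no further details, and you supply those details. You correctly isolate the one nontrivial step --- that slicing by $\{\nu_1\geq t\}$ need not commute with closure --- and resolve it via the convexity of the valuation image of effective $\RR$-divisors (which indeed holds, since $\nu_{Y_\bullet}$ is additive on effective $\RR$-divisors) together with a witness point of first coordinate strictly exceeding $t$, which is precisely where the hypothesis $t<\mu(\xi,Y_1)$ is used.
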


This statement first appeared  in \cite{LM}*{Theorem~4.24} with the additional condition that $Y_1\nsubseteq \Bplus(\xi)$.

We  will need  a version of \cite{AKL}*{Lemma 8} for real divisors.

\begin{lemma}\label{lem:nested}
Let $D$ be a big $\RR$-divisor, $A$ an ample $\RR$-divisor, $Y_\bullet$  an admissible flag on $X$.  Then, for any real number 
$\epsilon > 0$, we have
\[
 \Delta_{Y_\bullet}(D) \subseteq \Delta_{Y_\bullet}(D+\epsilon A) \ ,
\]
and $\Delta_{Y_\bullet}(D)\equ \bigcap_{\epsilon>0}\Delta_{Y_\bullet}(D+\epsilon A)$.
\end{lemma}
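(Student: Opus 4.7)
The plan for the inclusion $\Delta_{Y_\bullet}(D) \subseteq \Delta_{Y_\bullet}(D+\epsilon A)$ is to use the equivalent description of Newton--Okounkov bodies given by Proposition~\ref{prop:definition} and the remark that follows it: any $v\in\Delta_{Y_\bullet}(D)$ is a limit $v=\lim_k \nu_{Y_\bullet}(E_k)$ for effective $\RR$-divisors $E_k\equiv D$. The key auxiliary step will be to produce a single effective $A' \equiv \epsilon A$ with $\nu_{Y_\bullet}(A')=\mathbf{0}$. Since $\epsilon A$ is ample, its numerical class decomposes as a positive real combination $\sum r_i[H_i]$ of very ample integral classes; I would apply Bertini inductively along the flag (first to $|H_i|$ on $X$, then to the restriction of $H_i$ to $Y_1$, and so on), selecting representatives $H_i'$ whose successive restrictions to the $Y_j$ avoid $Y_{j+1}$, so that $\nu_{Y_\bullet}(H_i')=\mathbf{0}$, and then taking the corresponding real combination. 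Unwinding the inductive definition shows that $\nu_{Y_\bullet}$ is additive on effective divisors, so $\nu_{Y_\bullet}(E_k+A')=\nu_{Y_\bullet}(E_k)$; since $E_k+A'$ is effective and $\equiv D+\epsilon A$, Proposition~\ref{prop:definition} then places $v=\lim_k\nu_{Y_\bullet}(E_k+A')$ in $\Delta_{Y_\bullet}(D+\epsilon A)$.

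For the identity $\Delta_{Y_\bullet}(D)=\bigcap_{\epsilon>0}\Delta_{Y_\bullet}(D+\epsilon A)$, one containment is immediate from the first part. For the reverse, the first part (applied to $D+\epsilon' A$ with ample perturbation $(\epsilon-\epsilon')A$ for $0<\epsilon'<\epsilon$) shows that the family $\{\Delta_{Y_\bullet}(D+\epsilon A)\}_{\epsilon>0}$ is nested decreasing as $\epsilon\to 0^+$; hence the intersection $K$ is a compact convex set containing $\Delta_{Y_\bullet}(D)$. I would then invoke the identity $\mathrm{vol}_{\RR^n}(\Delta_{Y_\bullet}(\xi))=\tfrac{1}{n!}\mathrm{vol}_X(\xi)$ from \cite{LM}, together with continuity of $\mathrm{vol}_X$ on the big cone, to get $\mathrm{vol}(\Delta_{Y_\bullet}(D+\epsilon A))\to\mathrm{vol}(\Delta_{Y_\bullet}(D))$. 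Monotone convergence for nested compact sets yields $\mathrm{vol}(K)=\mathrm{vol}(\Delta_{Y_\bullet}(D))$. Because $\Delta_{Y_\bullet}(D)\subseteq K$ are two compact convex sets with equal positive volume (positive since $D$ is big), a standard convex-geometric argument forces equality.

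The main obstacle is the construction in the first part of an effective representative $A'\equiv\epsilon A$ with vanishing valuation along the entire flag: since $\epsilon A$ is only an $\RR$-divisor, one has to decompose its class through very ample integral pieces and carry Bertini through each successive restriction step. A lower-tech alternative is to use a sequence $A_m'\equiv\epsilon A$ with $\nu_{Y_\bullet}(A_m')\to\mathbf{0}$ and push the approximation through the closure built into Proposition~\ref{prop:definition}; that would sidestep producing a single $A'$ with exactly zero valuation, at the cost of an extra limiting step.
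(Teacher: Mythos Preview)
Your argument is correct. For the inclusion, you follow the same strategy as the paper: produce an effective $A'\equiv\epsilon A$ with $\nu_{Y_\bullet}(A')=\mathbf 0$ and add it to effective representatives of $D$. The paper, however, bypasses your inductive Bertini construction with a one-line observation: it suffices to pick any effective $M\sim_{\RR} A$ whose support misses the single point $Y_n$. Indeed, if $Y_n\notin\Supp(M)$ then the local equation of $M$ is a unit at $Y_n$, so every successive restriction in the flag is again a unit at $Y_n$, forcing $\nu_{Y_\bullet}(M)=\mathbf 0$. Your Bertini-along-the-flag step works, but it is more machinery than the situation demands.

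For the equality $\Delta_{Y_\bullet}(D)=\bigcap_{\epsilon>0}\Delta_{Y_\bullet}(D+\epsilon A)$ you take a genuinely different route. The paper simply invokes the continuity of Newton--Okounkov bodies in the big cone (the global Okounkov body of \cite{LM} exhibits $\Delta_{Y_\bullet}(\xi)$ as the fibre of a closed convex cone over $\xi$, giving Hausdorff continuity). Your argument instead uses only the volume identity $\mathrm{vol}_{\RR^n}(\Delta_{Y_\bullet}(\xi))=\tfrac{1}{n!}\mathrm{vol}_X(\xi)$, continuity of $\mathrm{vol}_X$, and the elementary fact that nested compact convex bodies of equal positive Euclidean volume coincide. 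This is a nice, self-contained alternative: it trades the somewhat heavier global-cone machinery for basic measure theory and convexity. Either way the conclusion follows; your version has the advantage of being usable even if one has not set up the global Okounkov body.
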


\begin{proof}
For the first claim, since $A$ is an ample $\RR$-divisor, one can find an effective $\RR$-divisor $M\sim_{\RR}A$ with  $Y_n\notin\Supp(M)$. Then for any arbitrary 
effective divisor $F\sim_{\RR}D$ one has  $F+\epsilon M \equiv_{\RR} D+\epsilon A$ and $\nu_{\ybul}(F+M)=\nu_{\ybul}(F)$. Therefore
\[
\st{\nu_{Y_{\bullet}}(\xi) \mid \xi\in \textup{Div}_{\geq 0}(X)_{\RR}, D\equiv \xi} \subseteq %
\st{ \nu_{Y_{\bullet}}(\xi) \mid \xi \in \textup{Div}_{\geq 0}(X)_{\RR}, D+\epsilon A\equiv \xi}
\]
and we are done by Proposition~\ref{prop:definition}.

The equality of the second claim  is a  consequence of the previous inclusion and the continuity  of Newton--Okounkov bodies.
\end{proof}


\section{Restricted base loci} 

Our main goal here is to give a characterization of  restricted base loci in the language of Newton--Okounkov bodies.

\begin{theorem}\label{thm:main1}
Let $D$ be a big $\RR$-divisor on a smooth projective variety $X$ of dimension $n$, let $x\in X$. Then the following are equivalent.  
\begin{enumerate}
 \item $x\not\in \Bminus(D)$.
 \item There exists an admissible flag $\ybul$ on $X$ centered at $x$ such that $\origin\in \Delta_{\ybul}(D)\subseteq\RR^n$.
 \item The origin $\origin\in \Delta_{\ybul}(D)$ for every admissible flag $\ybul$ on $X$ centered at $x\in X$.
\end{enumerate}
\end{theorem}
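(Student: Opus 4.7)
The implication $(3) \Rightarrow (2)$ is immediate by specialization. For $(1) \Rightarrow (3)$, fix any admissible flag $\ybul$ centered at $x$ and any ample $\RR$-divisor $A$. Using the description $\Bminus(D) = \bigcup_{m \geq 1} \textbf{B}(D + \frac{1}{m}A)$ recalled in the preliminaries, the hypothesis $x \notin \Bminus(D)$ provides, for each $m \geq 1$, an effective $\RR$-divisor $E_m \equiv D + \frac{1}{m}A$ with $x \notin \Supp(E_m)$; since every $Y_i$ contains $x$, a straightforward induction on $i$ in the definition of $\nu_{\ybul}$ yields $\nu_{\ybul}(E_m) = \origin$, so $\origin \in \Delta_{\ybul}(D + \frac{1}{m}A)$ by Proposition~\ref{prop:definition}. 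Lemma~\ref{lem:nested} applied along the sequence $\epsilon_m = \frac{1}{m}$ then gives $\origin \in \Delta_{\ybul}(D)$.

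The core of the theorem is $(2) \Rightarrow (1)$. My strategy is to convert the hypothesis $\origin \in \Delta_{\ybul}(D)$ into the vanishing of the asymptotic multiplicity $\mathrm{mult}_x(\|D\|)$, and then conclude via the ELMNP-type characterization $x \notin \Bminus(D) \Leftrightarrow \mathrm{mult}_x(\|D\|) = 0$. At the smooth point $x$ one chooses local coordinates $(z_1,\dots,z_n)$ adapted to the flag so that $Y_i = V(z_1,\dots,z_i)$ locally; then for an effective $\RR$-divisor $F$ with local equation $f = \sum c_\alpha z^\alpha$, the valuation $\nu_{\ybul}(F)$ is the lexicographic minimum of the exponent multiset $\{\alpha : c_\alpha \neq 0\}$, which extends linearly to $\RR$-divisors and yields the elementary inequality $\mathrm{mult}_x(F) \leq |\nu_{\ybul}(F)|_1$. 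Combined with the density remark after Proposition~\ref{prop:definition}, the hypothesis $\origin \in \Delta_{\ybul}(D)$ then forces $\inf\{\mathrm{mult}_x(F) : F \equiv D,\ F \geq 0\} = 0$.

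To pass to the asymptotic multiplicity, fix any ample $\RR$-divisor $A$; for each $\epsilon > 0$ choose an effective $M_\epsilon \equiv \epsilon A$ with $x \notin \Supp(M_\epsilon)$ (available by ampleness), so that for every effective $F_0 \equiv D$, the sum $F_0 + M_\epsilon$ is an effective representative of $D + \epsilon A$ with $\mathrm{mult}_x(F_0 + M_\epsilon) = \mathrm{mult}_x(F_0)$. Taking the infimum over $F_0$ and then the limit $\epsilon \to 0^+$ yields
\[
\mathrm{mult}_x(\|D\|) \;:=\; \lim_{\epsilon \to 0^+}\inf\bigl\{\mathrm{mult}_x(F) : F \equiv D + \epsilon A,\ F \geq 0\bigr\} \;\leq\; \inf_{F \equiv D}\mathrm{mult}_x(F) \;=\; 0.
\]

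The principal obstacle is the characterization $x \notin \Bminus(D) \Leftrightarrow \mathrm{mult}_x(\|D\|) = 0$ in the big $\RR$-divisor setting. For $\QQ$-Cartier divisors this is classical (ELMNP); extending it to $\RR$-divisors requires a Seshadri-type absorption argument that, given an approximate representative $F$ of $D + \epsilon A$ with $\mathrm{mult}_x(F) < \eta$, produces an honest $x$-avoiding representative of $D + (\epsilon + \eta)A$ by exploiting the ample buffer $\eta A$ with $\eta$ small compared to the Seshadri constant $\varepsilon(A;x)$. Carrying out this absorption uniformly in $\epsilon$ and compatibly with $\RR$-linear equivalence — converting the vanishing of an asymptotic invariant into the attainment statement required by the definition of $\Bminus$ — is the technical heart of the proof.
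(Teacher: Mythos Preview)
Your treatment of $(3)\Rightarrow(2)$ and $(1)\Rightarrow(3)$ is essentially the paper's argument (the paper phrases it via sections of $\QQ$-multiples rather than effective $\RR$-representatives, but the content is identical once one uses Proposition~\ref{prop:definition} and Lemma~\ref{lem:nested}).

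For $(2)\Rightarrow(1)$ you correctly isolate the key inequality $\mult_x(F)\leq |\nu_{\ybul}(F)|_1$ --- this is precisely the paper's Lemma~\ref{lem:1} and Proposition~\ref{prop:1} --- and from it you obtain $\inf\{\mult_x(F):F\equiv D+\epsilon A,\ F\geq 0\}=0$ for every $\epsilon>0$. The genuine gap is the final step. Your proposed ``Seshadri-type absorption'' to establish an $\RR$-divisor version of the ELMNP characterization is only gestured at, and as written it is not clear it succeeds: converting \emph{arbitrarily small} multiplicity at $x$ into an honest $x$-avoiding effective representative is exactly the nontrivial direction, and a Seshadri bound on $A$ does not by itself manufacture such a representative compatibly with $\RR$-linear (or numerical) equivalence.

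The paper sidesteps this entirely with a reduction you were one move away from. Rather than prove an $\RR$-version of Proposition~\ref{prop:ELMNP}, choose $t_m\searrow 0$ so that each $D+t_mA$ is a $\QQ$-divisor. By Lemma~\ref{lem:nested} one has $\origin\in\Delta_{\ybul}(D)\subseteq\Delta_{\ybul}(D+t_mA)$, hence $\min\sigma_{D+t_mA}=0$, and Proposition~\ref{prop:1} gives $\mult_x(\|D+t_mA\|)=0$ for each $m$. Now the classical $\QQ$-divisor case of Proposition~\ref{prop:ELMNP} applies directly and yields $x\notin\Bminus(D+t_mA)$. The conclusion follows from the elementary identity $\Bminus(D)=\bigcup_m\Bminus(D+t_mA)$, which the paper verifies from the definitions. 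No absorption argument is needed; the passage to rational perturbations does all the work, and your $M_\epsilon$ trick and the density of valuation vectors become unnecessary.
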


Coupled with simple properties of restricted base loci we arrive at a precise description of big and nef divisors in terms of convex geometry. 

\begin{corollary}\label{cor:nef}
With notation as above the following are equivalent for a big $\RR$-divisor $D$.
\begin{enumerate}
 \item $D$ is nef.
 \item For every point $x\in X$  there exists an admissible flag $\ybul$ on $X$ centered at $x$ such that $\origin\in \Delta_{\ybul}(D)\subseteq\RR^n$.
 \item For every admissible flag $\ybul$,   one has $\origin\in \Delta_{\ybul}(D)$. 
\end{enumerate}
\end{corollary}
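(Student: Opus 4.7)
The plan is to deduce this corollary directly from Theorem~\ref{thm:main1} by varying the point $x$ over all of $X$, once we know that nefness is detected by the vanishing of the restricted base locus. The key auxiliary fact I would invoke is the standard characterization (coming from \cite{ELMNP1}): for a big $\RR$-divisor $D$, one has $\Bminus(D)\equ\emptyset$ if and only if $D$ is nef. The ``nef implies $\Bminus(D) \equ \emptyset$'' direction is immediate from the definition, since adding an ample class to a nef class produces an ample class, whose stable base locus is empty. For the converse I would cite \cite{ELMNP1}*{Example~1.18}, where it is shown that $\Bminus(D)\equ\emptyset$ forces $D\cdot C\geq 0$ for every irreducible curve $C\subseteq X$.

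Granting this, the argument proceeds in three short steps. First, the displayed equivalence says that $D$ is nef if and only if no point of $X$ lies in $\Bminus(D)$, which reformulates (1) as a pointwise condition. Second, applying Theorem~\ref{thm:main1} at each $x\in X$ translates ``$x\notin\Bminus(D)$'' into the (pointwise) existence, respectively universality, of an admissible flag centered at $x$ whose Newton--Okounkov body contains $\origin$. Third, I would observe that quantifying ``for every admissible flag $\ybul$ on $X$'' is the same as quantifying ``for every $x\in X$ and every admissible flag $\ybul$ centered at $x$'', since each admissible flag is centered at its terminal point $Y_n$. Patching these three observations together turns conditions (1), (2), (3) of the corollary into the conditions (1), (2), (3) of Theorem~\ref{thm:main1} ranging over all $x\in X$, which completes the proof.

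There is no real obstacle here beyond packaging Theorem~\ref{thm:main1} pointwise; the only substantive input is the equivalence ``$D$ nef $\iff\Bminus(D)\equ\emptyset$'' from \cite{ELMNP1}. It is worth noting that the big hypothesis on $D$ is already built into Theorem~\ref{thm:main1} and into the very definition of the Newton--Okounkov bodies $\Delta_{\ybul}(D)$ being used, so no additional assumption beyond bigness of $D$ is needed in the corollary.
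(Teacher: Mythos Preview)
Your argument is correct and follows exactly the paper's approach: the paper's proof is the one-line ``Immediate from Theorem~\ref{thm:main1} and \cite{ELMNP1}*{Example 1.18}'', and you have simply spelled out the packaging of Theorem~\ref{thm:main1} over all $x\in X$ together with the equivalence $D$ nef $\iff \Bminus(D)=\varnothing$.
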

\begin{proof}
Immediate from Theorem~\ref{thm:main1} and  \cite{ELMNP1}*{Example 1.18}. 
\end{proof}

The essence of the proof of Theorem~\ref{thm:main1} is to connect the asymptotic multiplicity of $D$ at $x$ to a certain  function defined on the Newton-Okounkov body
of $D$. Before turning to the actual proof, we will quickly recall the notion of the \emph{asymptotic multiplicity}  or the \emph{asymptotic order of vanishing} of 
a $\QQ$-divisor $F$ at a point $x\in X$. 

Let $F$ be  an effective Cartier divisor on $X$, defined locally by the equation $f\in \sO_{X,x}$. Then \textit{multiplicity} of $F$ at $x$ is defined to be 
$\mult_x(F)=\textup{max}\{n\in\NN | f\in \mathfrak{m}^n_{X,x}\}$, where $\mathfrak{m}_{X,x}$ denotes the maximal ideal of the local ring $\sO_{X,x}$. If $|V|$ is a linear series, 
then the multiplicity of $|V|$ is defined to be 
\[
\mult_x(|V|) \deq \min_{F\in |V|}\{\mult_x(F)\}\ . 
\]
By  semicontinuity the above expression equals  the multiplicity  of a general element in $|V|$ at $x$. The \emph{asymptotic multiplicity} of a $\QQ$-divisor $D$ 
at $x$  is then defined to be
\[
\mult_x(||D||) \deq  \lim_{p\rightarrow \infty}\frac{\mult_x(|pD|)}{p}\ .
\]
The multiplicity at $x$ coincides with the order of vanishing at $x$, given in Definition~2.9 from \cite{ELMNP1}. In what follows we will talk about the multiplicity 
of a divisor, but the order of vanishing of a section of a line bundle. 

An important technical  ingredient of  the proof of Theorem~\ref{thm:main1} is a result of \cite{ELMNP1}, which we now recall.

\begin{proposition}(\cite{ELMNP1}*{Proposition 2.8})\label{prop:ELMNP}
Let $D$ be a big $\QQ$-divisor on a smooth projective variety $X$, $x\in X$ an arbitrary (closed) point. Then the following are equivalent. 
\begin{enumerate}
 \item There exists  $C>0$ having the property that  $\mult_x(|pD|)<C$, whenever $|pD|$ is nonempty for some positive integer $p$..
 \item $\mult_x(\|D\|) = 0$.
 \item $x\notin \Bminus(D)$. 
\end{enumerate}
\end{proposition}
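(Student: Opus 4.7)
The plan is to establish the cyclic chain of implications $(1) \Rightarrow (2) \Rightarrow (3) \Rightarrow (1)$.

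The step $(1) \Rightarrow (2)$ is immediate: a uniform bound $\mult_x(|pD|) < C$ forces $\mult_x(|pD|)/p \leq C/p \to 0$, hence $\mult_x(\|D\|) = 0$ by definition of the asymptotic multiplicity.

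For $(2) \Rightarrow (3)$ the natural technology is the asymptotic multiplier ideal sheaf $\sJ(\|D\|)$, whose stalk at $x$ captures precisely the asymptotic order of vanishing there. The hypothesis $\mult_x(\|D\|) = 0$ translates into $\sJ(\|D\|)_x = \sO_{X,x}$. Fix a very ample divisor $H$ on $X$ and set $L = K_X + (n+1)H$. Invoking Nadel vanishing combined with Castelnuovo--Mumford regularity, the twist $\sJ(\|pD\|) \otimes \sO_X(pD + L)$ is globally generated for every $p \gg 0$, and the triviality of the stalk at $x$ supplies a section of $|pD + L|$ nonvanishing at $x$. Hence $x \notin \textbf{B}(D + \tfrac{1}{p}L)$ for $p \gg 0$. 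For any ample class $A$, the difference $A - \tfrac{1}{p}L$ becomes ample once $p$ is large, so $\textbf{B}(D+A) \subseteq \textbf{B}(D + \tfrac{1}{p}L)$ and $x \notin \textbf{B}(D+A)$. Since $A$ was arbitrary, $x \notin \Bminus(D)$.

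The main obstacle is $(3) \Rightarrow (1)$. Subadditivity of $p \mapsto \mult_x(|pD|)$ together with $\mult_x(\|D\|) = 0$ only yields $\mult_x(|pD|) = o(p)$, not the uniform bound required in $(1)$, and integrality of the sequence does not close this gap. My approach would go via the following route. Use $x \notin \Bminus(D)$ to produce, for some ample $\RR$-divisor $A$ and integer $m_0$, a section $s_0 \in H^0(m_0 D + m_0 A)$ with $s_0(x) \neq 0$. Combine this section with the uniform global generation of $\sJ(\|pD\|) \otimes \sO_X(pD + L)$ from the previous step to produce, for every $p$ with $|pD|$ nonempty, a section of $|pD|$ whose order of vanishing at $x$ is dominated by an absolute constant depending only on $m_0$, $A$ and $L$. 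The finitely many small values of $p$ for which the asymptotic construction is not yet effective contribute only a finite collection of multiplicities, which can be absorbed into the final constant $C$.
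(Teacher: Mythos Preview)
The paper does not prove this proposition; it is quoted from \cite{ELMNP1}*{Proposition~2.8} and used as a black box, so there is no argument in the paper to compare yours against. I will therefore comment on your attempt on its own.

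Your $(1)\Rightarrow(2)$ is fine, and $(2)\Rightarrow(3)$ is essentially correct; note only that you need $\sJ(\|pD\|)_x=\sO_{X,x}$ for all $p\gg 0$, not just for $p=1$, but this follows since $\mult_x(\|pD\|)=p\cdot\mult_x(\|D\|)=0$.

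The step $(3)\Rightarrow(1)$, however, has a genuine gap. The section $s_0\in H^0(X,\sO_X(m_0D+m_0A))$ you extract from $x\notin\Bminus(D)$ lives in the \emph{wrong} linear system: tensoring by $s_0$ moves you further from $|pD|$, not closer, so there is no way to combine it with sections of $|pD+L|$ to land in $|pD|$. Moreover, the global generation of $\sJ(\|pD\|)\otimes\sO_X(pD+L)$ is of no help at $x$ unless you already know $\sJ(\|pD\|)_x=\sO_{X,x}$, and that is precisely condition $(2)$, which in your cycle has not yet been derived from $(3)$. The standard route is to first prove $(3)\Rightarrow(2)$ directly --- e.g.\ $x\notin\Bminus(D)$ gives $\mult_x(\|D+\tfrac{1}{m}A\|)=0$ for every $m$, and then one invokes continuity of $\xi\mapsto\mult_x(\|\xi\|)$ on the big cone --- and only afterwards deduce $(1)$ from $(2)$: by bigness of $D$ pick $t\in |m_0D-L|$ with $c\deq\mult_x(t)$; for $p\geq m_0$, tensor $t$ with a section of $|(p-m_0)D+L|$ not vanishing at $x$ (now available thanks to $(2)$ and the multiplier-ideal argument) to obtain an element of $|pD|$ with multiplicity $c$ at $x$, giving the uniform bound.
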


The connection between asymptotic multiplicity and Newton--Okounkov bodies comes from the claim below. 

\begin{lemma}\label{lem:1}
Let $M$ be an integral Cartier divisor on a projective variety  $X$ (not necessarily smooth),  $s\in H^0(X,\sO_X(M))$  a non-zero global section. Then 
\begin{equation}\label{eq:1}
\ord_x(s) \ \leq \ \sum_{i=1}^{i=n} \nu_i(s),
\end{equation}
for any admissible flag $Y_{\bullet}$ centered $x$, where $\nu_{\ybul}=(\nu_1,\ldots ,\nu_n)$ is the valuation map arising from $Y_{\bullet}$.
\end{lemma}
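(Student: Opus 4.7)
The plan is to argue by induction on the length $n$ of the flag. The case $n=0$ is vacuous, since the sum is empty and $\ord_x(s)=0$ for a nonzero section of a line bundle on a point.

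For the inductive step, let $f_1\in\sO_{X,x}$ be a local equation for $Y_1$ at $x$. Since $Y_1$ is smooth at $x$, one has $f_1\in\mathfrak m_{X,x}\setminus\mathfrak m_{X,x}^2$, so $\ord_x(f_1)=1$. After trivialising $M$ near $x$, I would view $s$ as a regular function and factor it as $s=f_1^{\nu_1(s)}\cdot s'$, where $s'$ is not divisible by $f_1$ and therefore restricts to a non-zero section $\tilde s:=s'|_{Y_1}$ of $\sO_{Y_1}(M-\nu_1(s)Y_1)$; the inductive construction of the flag valuation gives
\[
\nu_{Y_1\supseteq\cdots\supseteq Y_n}(\tilde s)\ =\ (\nu_2(s),\ldots,\nu_n(s)).
\]
Multiplicativity of the $\mathfrak m_{X,x}$-adic order then yields
\[
\ord_x(s)\ =\ \nu_1(s)\cdot\ord_x(f_1)+\ord_x(s')\ =\ \nu_1(s)+\ord_x(s').
\]

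The second ingredient is the elementary monotonicity of multiplicity under restriction to a subvariety through $x$. Writing $I\subset\sO_{X,x}$ for the local ideal of $Y_1$ at $x$, one has $\sO_{Y_1,x}=\sO_{X,x}/I$ and $\mathfrak m_{Y_1,x}^{k}=(\mathfrak m_{X,x}^{k}+I)/I$, so every element of $\mathfrak m_{X,x}^{k}$ maps into $\mathfrak m_{Y_1,x}^{k}$. Applied to $s'$, this immediately gives
\[
\ord_x(s')\ \leq\ \ord_x(\tilde s).
\]

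The truncated flag $Y_1\supseteq Y_2\supseteq\cdots\supseteq Y_n$ is an admissible flag on the projective variety $Y_1$ (of dimension $n-1$) centered at $x$, so the inductive hypothesis applied to $\tilde s$ yields $\ord_x(\tilde s)\leq\sum_{i=2}^{n}\nu_i(s)$. Chaining the three inequalities closes the induction. The only mildly subtle step is the restriction monotonicity of multiplicity, but it is a direct consequence of unwinding the definition of the maximal ideal in the quotient ring; no deeper technical obstruction is anticipated.
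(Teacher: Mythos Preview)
Your proof is correct and follows essentially the same inductive route as the paper's: factor out $Y_1$, use multiplicativity of $\ord_x$ in the regular local ring $\sO_{X,x}$, bound $\ord_x(s')$ by the order of its restriction to $Y_1$, and induct on the truncated flag. You spell out the restriction-monotonicity step more carefully than the paper does; conversely, it would be worth stating explicitly (as the paper does) that multiplicativity of the $\mathfrak m_{X,x}$-adic order uses regularity of $\sO_{X,x}$, which is guaranteed by admissibility of the flag since $Y_0=X$ is smooth at $x$.
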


\begin{proof}
Since $Y_{\bullet}$ is an admissible flag and the question is local, we can assume without loss of generality that each element in the flag is smooth, 
thus $Y_{i}\subseteq Y_{i-1}$ is Cartier for each $1\leq i\leq n$. 

As the local ring $\sO_{X,x}$ is regular, order of vanishing is multiplicative. Therefore 
\[
 \ord_x(s) \equ \nu_1(s)+\ord_x (s-\nu_1(s)Y_1) \dleq \nu_1(s)+\ord_x ( (s-\nu_1(s)Y_1)|_{Y_1})
\]
by the very definition of $\nu_{\ybul}(s)$, and the rest follows by induction. 
\end{proof}

\begin{remark}\label{rem:1}
Note  that the inequality in (\ref{eq:1}) is not in general  an equality for the reason that  the zero locus of $s$ might  not intersect an element of the flag 
transversally. For the simplest example of this phenomenon set  $X=\PP^2$, and take $s=xz-y^2\in H^0(\PP^2,\sO_{\PP^2}(2))$, $Y_1=\{x=0\}$ and $Y_2=[0:0:1]$. 
Then clearly $\nu_1(s)=0$, and $\nu_2(s)=\ord_{Y_2}(-y^2)=2$, but since  $Y_2$ is a smooth point of $(s)_0=\{xz-y^2=0\}$,  $\ord_{Y_2}(s)=1$ and hence 
$\ord_{Y_1}(s)<\nu_1(s)+\nu_2(s)$.
\end{remark}

For a compact convex body $\Delta\subseteq \RR^n$, we define the \textit{sum function} $\sigma:\Delta\rightarrow \RR_{+}$ by  $\sigma(x_1,\ldots ,x_n)=x_1+\ldots +x_n$. Being  continuous on a compact topological space, it takes on its extremal values.  If $\Delta_{Y_{\bullet}}(D)\subseteq \RR^n$ be a Newton--Okounkov body, then we denote the sum function by $\sigma_D$, even though it does depend on the choice of the flag $Y_{\bullet}$.

\begin{proposition}\label{prop:1}
Let  $D$ be  a big $\QQ$-divisor on a projective variety $X$ (not necessarily smooth) and let $x\in X$ a  point. Then 
\begin{equation}\label{eq:2}
\mult_x(||D||) \ \leq \ \min \sigma_D .
\end{equation}
for any admissible flag $Y_{\bullet}$ centered at $x$.
\end{proposition}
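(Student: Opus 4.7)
The strategy is to combine the pointwise inequality in Lemma~\ref{lem:1} with the fact that rescaled valuation vectors are dense in the Newton--Okounkov body, and then pass to the limit. Since $\sigma$ is continuous and $\Delta_{Y_\bullet}(D) \subseteq \RR^n_+$ is compact, the minimum $\min \sigma_D$ is attained at some point $v_0 \in \Delta_{Y_\bullet}(D)$. My goal is to produce a sequence of sections whose scaled valuation vectors approach $v_0$, control the order of vanishing at $x$ via Lemma~\ref{lem:1}, and then extract the desired bound after dividing by the appropriate integer and taking the limit.

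\textbf{Step 1 (selecting the approximating sequence).} Recalling that the set $\bigcup_{p \geq 1, pD \text{ integral}} \frac{1}{p}\nu_{Y_\bullet}\bigl(H^0(X, \sO_X(pD)) \setminus \{0\}\bigr)$ is dense in $\Delta_{Y_\bullet}(D)$ (this is the standard construction of the body via the graded semigroup), I would choose a sequence of positive integers $p_k \to \infty$ such that $p_k D$ is Cartier, together with nonzero sections $s_k \in H^0(X, \sO_X(p_k D))$, satisfying
\[
\tfrac{1}{p_k}\nu_{Y_\bullet}(s_k) \ \longrightarrow \ v_0 \quad \text{in } \RR^n.
\]

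\textbf{Step 2 (applying Lemma~\ref{lem:1} and dividing).} Applying Lemma~\ref{lem:1} to each $s_k$ gives
\[
\ord_x(s_k) \ \leq \ \nu_1(s_k) + \cdots + \nu_n(s_k) \equ \sigma\bigl(\nu_{Y_\bullet}(s_k)\bigr).
\]
Since $\mult_x(|p_k D|)$ is the minimum of $\ord_x$ over sections of $|p_k D|$, one has $\mult_x(|p_k D|) \leq \ord_x(s_k)$. Dividing by $p_k$ yields
\[
\frac{\mult_x(|p_k D|)}{p_k} \ \leq \ \sigma\!\left(\tfrac{1}{p_k}\nu_{Y_\bullet}(s_k)\right).
\]

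\textbf{Step 3 (passing to the limit).} By continuity of $\sigma$ and the choice of $(s_k)$, the right-hand side tends to $\sigma(v_0) = \min \sigma_D$. On the left, since $\mult_x(\|D\|)$ is defined as the limit $\lim_{p\to\infty}\mult_x(|pD|)/p$ over sufficiently divisible $p$, any subsequence along integers with $p_k D$ integral converges to the same value, giving
\[
\mult_x(\|D\|) \ = \ \lim_{k\to\infty}\frac{\mult_x(|p_k D|)}{p_k} \ \leq \ \min \sigma_D,
\]
which is the claimed inequality.

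The argument is essentially routine once Lemma~\ref{lem:1} is in hand; the only mild subtlety is matching up divisibility of $p_k$ with the definition of $\mult_x(\|D\|)$ and ensuring that the density of rescaled valuation vectors in $\Delta_{Y_\bullet}(D)$ is available, both of which are standard. No smoothness of $X$ is needed beyond what is already built into the admissibility of $Y_\bullet$ (smoothness at the flag center $x$), so the statement indeed holds for a general projective variety $X$.
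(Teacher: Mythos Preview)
Your proof is correct and follows essentially the same route as the paper: both combine Lemma~\ref{lem:1} with the density of normalized valuation vectors in $\Delta_{Y_\bullet}(D)$. The only organizational difference is that the paper, rather than selecting a sequence $p_k\to\infty$ converging to the minimizer, uses submultiplicativity ($\mult_x(|qpD|) \leq q\,\mult_x(|pD|)$) to obtain $\mult_x(\|D\|) \leq \tfrac{1}{p}\sum_i\nu_i(s)$ directly for \emph{every} pair $(p,s)$, and then takes the infimum on the right; this sidesteps the (easy but unstated) step in your argument of ensuring $p_k\to\infty$, which you can arrange by replacing each $s_k$ with a suitable tensor power.
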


\begin{proof}
Since both sides of (\ref{eq:2})  are homogeneous of degree one in $D$, we can assume without loss of generality that $D$ is integral. 
Fix a natural number $p\geq 1$ such that $|pD|\neq \varnothing$, and  let $s\in H^{0}(X,\sO_X(pD))$ be a non-zero global section. Then
\[
\frac{1}{p}\mult_x(|pD|)  \dleq  \frac{1}{p}\ord_x(s)  \dleq  \frac{1}{p}\big(\sum_{i=1}^{i=n}\nu_i(s)\big)
\]
by Lemma~\ref{lem:1}. 

Multiplication of sections and the definition of the multiplicity of a linear series then yields
$\mult_x(|qpD|)\leq q\mult_x(|pD|)$ for any $q\geq 1$, which, after taking limits leads to 
\[
\mult_x(||D||)  \dleq  \frac{1}{p}\mult_x(|pD|) \dleq  \frac{1}{p}\big(\sum_{i=1}^{i=n}\nu_i(s)\big).
\]
Varying the section $s$ and taking into account that $\Delta_{\ybul}(D)$  is 
the closure of the set  of normalized valuation vectors of  sections, we deduce the required statement.
\end{proof}

\begin{example}\label{rem:2}
The inequality in (\ref{eq:2}) is usually strict. For a concrete example  take $X=\textup{Bl}_P(\PP^2)$, $D=\pi^*(H)+E$ and the flag $Y_{\bullet}=(C,x)$, 
where $C\in|3\pi^*(H)-2E|$ is the proper transform of a rational curve with a single cusp at $P$,  and $\{x\}=C\cap E$, i.e. 
the point where $E$ and $C$ are tangent to each other. Then 
\[
\mult_x(||D||) \equ  \lim_{p\to\infty}\Big(\frac{\mult_x(|pD|)}{p}\Big) \equ \lim_{p\to\infty}\Big(\frac{\mult_x(|pE|)}{p}\Big)  \equ 1\ .
\]
On the other hand, a direct computation using \cite{LM}*{Theorem 6.4} shows that 
\[
\Delta_{Y_{\bullet}}(D) \equ  \{ (t,y)\in\RR^2 \ | \ 0\leq t\leq \frac{1}{3}, \textup{ and } 2+4t\leq y\leq 5-5t\}\ . 
\]
As a result, $\min\sigma_D =2 > 1$. 
 
For more on this phenomenon, see  Proposition~\ref{prop:starting} below.
\end{example}

\begin{remark}
We note here a connection with functions on Okounkov bodies coming from divisorial valuations. With the notation of \cite{BKMS12}, our 
Lemma~\ref{lem:1} says that $\phi_{\ord_x} \leq \sigma_D$, and a quick computation shows that we obtain equality in the case of projectice spaces, 
hyperplane bundles, and linear flags. Meanwhile, Example~\ref{rem:2} illustrates  that $\min \phi_{\ord_x} \neq \mult_x \|D\|$
in general. 
\end{remark}

\begin{proof}[Proof of Theorem~\ref{thm:main1}] 
$(1)\Rightarrow(3)$ We are assuming  $x\notin \Bminus(D)$; let us fix an ample Cartier divisor $A$ and a decreasing sequence of real number $t_m$ such that 
$D+t_mA$ is a $\QQ$-divisor. Let $\ybul$ be an arbitrary admissible flag centered at $x$. 

Then  $x\notin \textbf{B}(D+t_mA)$ for every $m\geq 1$, furthermore,   since $A$ is ample, Lemma~\ref{lem:nested} yields
\begin{equation}\label{eq:3}
\Delta_{Y_{\bullet}}(D) \equ \bigcap_{m=1}^{\infty} \Delta_{Y_{\bullet}}(D+t_mA) \ .
\end{equation}
Because  $x\notin \textbf{B}(D+t_mA)$ holds for any $m\geq 1$,  there must exist a sequence of natural numbers $n_m\geq 1$ and a  sequence of 
global sections  $s_m\in H^0(X,\sO_X(n_m(D+t_mA)))$ such that $s_m(x)\neq 0$. This implies that $\nu_{Y_{\bullet}}(s_m)=\origin$ for each $m\geq 1$. 
In particular, $\origin\in \Delta_{Y_{\bullet}}(D+t_mA)$ for each $m\geq 1$. 
By  (\ref{eq:3})  we deduce that  $\Delta_{Y_{\bullet}}(D)$  contains the origin as well.

The implication $(3)\Rightarrow (2)$ being trivial, we will now take care of $(2)\Rightarrow (1)$. To this end assume that  $Y_{\bullet}$ 
is an admissible flag  centered at $x$ having the property that  $\origin\in\Delta_{Y_{\bullet}}(D)$, let $A$ be an ample divisor, and $t_m$ a 
sequence of positive real numbers converging to zero with the additional property that $D+t_mA$ is a $\QQ$-divisor. By Lemma~\ref{lem:nested}, 
\[
 \origin \in \Delta_{\ybul}(D) \dsubseteq \Delta_{\ybul}(D+t_mA)
\]
for all $m\geq 0$. Whence  $\min\sigma_{D+t_mA}=0$ for all sum functions  $\sigma_D:\Delta_{Y_{\bullet}}(D)\rightarrow\RR_{+}$. By Proposition~\ref{prop:1} this forces $\mult_x(||D+t_mA||)=0$ for all $m\geq 1$, hence \cite{ELMNP1}*{Proposition 2.8} leads to $x\notin\Bminus(D+t_mA)$
for all $m\geq 1$. Since Proposition~\ref{prop:ELMNP} is only valid for $\QQ$-divisors, we are left with proving the equality
\[
\Bminus(D) \equ \bigcup_m\Bstable(D+t_mA) \equ \bigcup_m\Bminus(D+t_mA)\ .
\]
The first equality comes from \cite{ELMNP1}*{Proposition 1.19}, as far as the second one goes,  $\bigcup_m\Bstable(D+t_mA) \supseteq  \bigcup_m\Bminus(D+t_mA)$
holds since  $\Bminus(D+t_mA)\subseteq \textbf{B}(D+t_mA)$ for any $m\in \NN$ according to \cite{ELMNP1}*{Exercise~1.16}. To show 
$\bigcup_m\Bstable(D+t_mA) \subseteq \bigcup_m\Bminus(D+t_mA)$, we note that  for any $t_m$ as above one finds  $t_{m+k}<t_m$ for some natural number $k\in \NN$. 
Then
\[
\Bstable(D+t_mA) \equ \Bstable(D+t_{m+k}A+(t_m-t_{m+k})A) \equ \Bminus (D+t_{m+k}A) \ ,
\] 
where the latter inclusion follows from the definition of the restricted base locus.
\end{proof}

\begin{remark}\label{rmk:non-smooth nef}
A closer inspection of the above proof reveals that the implication $(1)\Rightarrow (3)$ holds on an arbitrary projective variety both in 
Theorem~\ref{thm:main1} and Corollary~\ref{cor:nef}. 
\end{remark}

We finish  with a precise version of  Proposition~\ref{prop:1} in the surface case, which also  provides  a complete  answer 
to the question of where the Newton-Okounkov body starts in the plane. Note that unlike Theorem~\ref{thm:main3},  
it gives a full description for an arbitrary flag. 

\begin{proposition}\label{prop:starting}
Let  $X$ be  a smooth projective surface,  $(C,x)$  an admissible flag,  $D$  a big $\QQ$-divisor on $X$ with Zariski decomposition $D=P(D)+N(D)$. 
Then
\begin{enumerate}
\item $\min\sigma_D=a+b$, where $a=\mult_C(N(D))$ and $b=\textup{mult}_{x}(N(D-aC)|_C)$,
\item $\mult_x(||D||)=a+b'$, where $b'=\mult_x(N(D-aC))$.
\end{enumerate}
Moreover, $(a,b)\in\Delta_{(C,x)}(D)$ and $\Delta_{(C,x)}(D)\subseteq (a,b)+\RR^2_+$.
\end{proposition}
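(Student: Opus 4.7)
My plan is to exploit Zariski decomposition through three complementary ingredients: the classical containment of the negative part inside every effective representative, Proposition~\ref{prop:compute} as a reduction tool, and Lemma~\ref{lem:nested} for producing a specific point in the Newton--Okounkov body. Write the Zariski decomposition $D = P + N$ and split $N = aC + N'$ with $C \not\subseteq \textup{Supp}(N')$. A direct check confirms that $D - aC = P + N'$ is itself the Zariski decomposition of $D - aC$ (nefness of $P$ is preserved, $P \cdot N'_j = 0$ for the remaining components, and the intersection matrix of $N'$ is a principal submatrix of a negative-definite one). In particular $b = \textup{mult}_x(N'|_C)$ and $b' = \textup{mult}_x(N')$. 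Since every effective $E \equiv D$ satisfies $E \geq N$ (a standard negative-definiteness argument applied to the "deficient" part of $E - N$, which is forced to be orthogonal to $P$ and self-nonpositive, hence zero), we get $\nu_C(E) \geq a$ everywhere on $\Delta_{(C,x)}(D)$, and Proposition~\ref{prop:compute} yields $\Delta_{(C,x)}(D) = \Delta_{(C,x)}(D - aC) + a\eone$. Thus I may assume $a = 0$, i.e., $C \not\subseteq \textup{Supp}(N)$, and then $b = \textup{mult}_x(N|_C)$.

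For the containment $\Delta_{(C,x)}(D) \subseteq (0,b) + \RR^2_+$ in the reduced setting, I take an arbitrary effective $E \equiv D$ and decompose $E = \nu_C(E)\cdot C + E'$ with $C \not\subseteq \textup{Supp}(E')$. Comparing coefficients along each prime divisor, the inequality $E \geq N$ together with $C \not\subseteq \textup{Supp}(N)$ force $E' \geq N$; restricting to $C$ (both sides make sense) gives $\nu_2(E) = \textup{ord}_x(E'|_C) \geq \textup{ord}_x(N|_C) = b$. Proposition~\ref{prop:definition} then promotes this pointwise bound on valuation vectors to the desired containment of convex bodies.

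To produce the corner $(0,b)$ as a valuation vector I use Lemma~\ref{lem:nested}: for a fixed ample $A$ and any $\epsilon > 0$, since $P + \epsilon A$ is ample, a generic section of a sufficiently divisible multiple produces $F_\epsilon \equiv_{\QQ} P + \epsilon A$ with neither $C$ nor $x$ in its support. Then $E_\epsilon := F_\epsilon + N$ is effective, numerically equivalent to $D + \epsilon A$, and direct computation gives $\nu(E_\epsilon) = (0,b)$; so $(0,b) \in \Delta_{(C,x)}(D + \epsilon A)$, and intersecting the nested family delivers $(0,b) \in \Delta_{(C,x)}(D)$. Undoing the reduction gives $(a,b) \in \Delta_{(C,x)}(D) \subseteq (a,b) + \RR^2_+$, from which $\min \sigma_D = a + b$ is immediate, proving (1) and the two closing clauses.

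For (2), for $m$ sufficiently divisible multiplication by the canonical section of $\sO_X(mN)$ identifies $H^0(X, mP)$ with $H^0(X, mD)$, so $\textup{mult}_x(|mD|) = \textup{mult}_x(|mP|) + m \cdot \textup{mult}_x(N)$; passing to the limit gives $\textup{mult}_x(\|D\|) = \textup{mult}_x(\|P\|) + \textup{mult}_x(N)$. Nefness of $P$ forces $\Bminus(P) = \varnothing$, so Proposition~\ref{prop:ELMNP} yields $\textup{mult}_x(\|P\|) = 0$; meanwhile $\textup{mult}_x(N) = a \cdot \textup{mult}_x(C) + \textup{mult}_x(N') = a + b'$ since $C$ is smooth at $x$. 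The main obstacle, I expect, is the inclusion in the second paragraph: it rests on the classical but substantive fact that the negative part of a Zariski decomposition is contained in every effective representative, and one has to check carefully that this containment persists when the $C$-component is peeled off and the remainder is restricted to $C$.
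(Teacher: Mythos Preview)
Your argument is correct. For part~(2) you follow essentially the paper's route: split $\mult_x(\|D\|)$ via the isomorphism $H^0(mP)\cong H^0(mD)$ and kill the contribution of $P$ using nefness (the paper cites \cite{PAGI}*{Proposition~2.3.12} directly rather than passing through $\Bminus(P)=\varnothing$ and Proposition~\ref{prop:ELMNP}, but the content is the same).

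For part~(1) and the ``Moreover'' clause, however, you take a genuinely different path. The paper simply invokes \cite{LM}*{Theorem~6.4}, the explicit description of the Newton--Okounkov polygon on a surface as the region between the graphs of $\alpha(t)$ and $\beta(t)$, and reads off that the minimum of $\sigma_D$ is attained at the lower-left vertex $(a,\alpha(a))=(a,b)$. You instead rebuild this vertex from first principles using only tools internal to the present paper: the containment $E\geq N(D)$ for every effective $E\equiv D$ (your negative-definiteness sketch is the standard one and is fine), Proposition~\ref{prop:compute} to peel off the $aC$ layer, the coefficient comparison $E'\geq N$ after removing the $C$-part to get $\nu_2\geq b$, and Lemma~\ref{lem:nested} together with a generic ample perturbation of $P$ to exhibit $(0,b)$ as an actual valuation point. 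This is longer but more self-contained: it avoids importing the full surface formula from \cite{LM}, and it makes transparent exactly which feature of Zariski decomposition (namely $N\leq E$ for all effective representatives) drives the result. The paper's approach, by contrast, is a one-line citation that buys brevity at the cost of a heavier external dependency.
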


\begin{proof}
$(1)$ This is an immediate consequence of \cite{LM}*{Theorem 6.4} in the light of the fact that $\alpha$ is an increasing function, 
hence $\min\sigma_D$ is taken up at the point $(a,\alpha(a))$. \\
$(2)$ Since $x$ is a smooth point, it will suffice  to check that $\mult_x(||D||)=\mult_x(N(D))$. As asymptotic multiplicity is homogeneity of degree
one (see \cite{ELMNP1}*{Remark 2.3}), we can safely assume that $D, P(D)$ and $N(D)$ are all integral. 

As one has isomorphisms $H^0(X,\sO_X(mP(D)))\rightarrow H^0(X,\sO_X(mD))$ for all $m\geq 1$ by \cite{PAGI}*{Proposition 2.3.21}, 
 the definition of asymptotic multiplicity yields
\[
\mult_x(||D||) \equ \mult_x(||P(D)||) \ + \ \mult_x(N(D)) \ .
\]
Observe that  $P(D)$ is big and nef therefore \cite{PAGI}*{Proposition 2.3.12} implies  $\mult_x(||P(D)||)=0$. This completes the proof.
\end{proof}

\section{Augmented base loci}

As explained in   \cite{ELMNP1}*{Example 1.16}, one has  inclusions 
$\Bminus(D)\subseteq \textup{\textbf{B}}(D)\subseteq\Bplus(D)$,  consequently, we expect that  
whenever $x\notin\Bplus(D)$,  Newton--Okounkov bodies attached to $D$ should contain more than just the origin. 
As we shall see below, it will turn out that under the condition above they in fact contain small simplices.

We will write 
\[
\Delta_{\epsilon} \deq  \{ (x_1,\ldots, x_n)\in\RR^n_{+} \ | \ x_1+\ldots +x_n\leq \epsilon\}
\]
for the standard $\epsilon$-simplex.

Our main statement is the following.

\begin{theorem}\label{thm:main2}
Let $D$ be a big $\RR$-divisor on $X$,  $x\in X$ be an arbitrary (closed) point. Then the following  are equivalent.
\begin{enumerate}
 \item $x\notin \textup{\textbf{B}}_+(D)$.
 \item There exists an admissible flag $Y_{\bullet}$ centered at $x$ with $Y_1$ ample  such that $\Delta_{\epsilon_0} 
\subseteq \Delta_{Y_{\bullet}}(D)$ for some $\epsilon_0 >0$. 
 \item For every  admissible flag $Y_{\bullet}$ centered at $x$ there exists $\epsilon >0$ (possibly depending on $\ybul$) 
 such that $\Delta_{\epsilon}\subseteq \Delta_{Y_{\bullet}}(D)$.
\end{enumerate}
\end{theorem}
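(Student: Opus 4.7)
I would prove Theorem~\ref{thm:main2} via the cycle $(3)\Rightarrow (2)\Rightarrow (1)\Rightarrow (3)$. The implication $(3)\Rightarrow(2)$ is immediate: one always extends $x$ to an admissible flag whose first member $Y_1$ is a smooth very ample divisor through $x$. For $(2)\Rightarrow(1)$, fix a witnessing flag with $Y_1$ ample and simplex $\Delta_{\epsilon_0}\subseteq\Delta_{Y_{\bullet}}(D)$. For every $0<\delta<\epsilon_0$ the vertex $\delta\eone$ lies in $\Delta_{Y_{\bullet}}(D)$, so Proposition~\ref{prop:compute} yields $\origin\in\Delta_{Y_{\bullet}}(D-\delta Y_1)$, and Theorem~\ref{thm:main1} then gives $x\notin\Bminus(D-\delta Y_1)$. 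Expanding $\Bminus(D-\delta Y_1)=\bigcup_{m}\Bstable(D-\delta Y_1+\tfrac{1}{m}Y_1)$ with $Y_1$ as the reference ample class, some $0<\delta'<\delta$ satisfies $x\notin\Bstable(D-\delta'Y_1)$; since $\delta'Y_1$ is ample, this witnesses $x\notin\Bplus(D)$.

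The substantive content is $(1)\Rightarrow(3)$. I would reduce it to the following \emph{ampleness lemma}: for every ample $\RR$-divisor $A$ and every admissible flag $Y_{\bullet}$ centered at $x$ there exists $\epsilon>0$ with $\Delta_{\epsilon}\subseteq\Delta_{Y_{\bullet}}(A)$. Granting this lemma, use $x\notin\Bplus(D)$ to choose a small ample $\QQ$-divisor $A$ with $x\notin\Bstable(D-A)$; set $E=D-A$, so $E$ is still big and $x\notin\Bstable(E)\supseteq\Bminus(E)$. The lemma applied to $A$ gives $\Delta_{\epsilon}\subseteq\Delta_{Y_{\bullet}}(A)$, Theorem~\ref{thm:main1} gives $\origin\in\Delta_{Y_{\bullet}}(E)$, and the Minkowski containment $\Delta_{Y_{\bullet}}(A)+\Delta_{Y_{\bullet}}(E)\subseteq\Delta_{Y_{\bullet}}(A+E)=\Delta_{Y_{\bullet}}(D)$ then delivers $\Delta_{\epsilon}\subseteq\Delta_{Y_{\bullet}}(D)$.

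The ampleness lemma itself I would prove by induction on $n=\dim X$, the case $n=1$ being $\Delta_{Y_{\bullet}}(A)=[0,\deg A]$. For the inductive step I assemble the simplex from two pieces. On the $\nu_1$-axis: since $\Bplus(A)=\emptyset$ and $B_{+}(x)$ is closed by Proposition~\ref{prop:openclosed}, one has $x\notin\Bplus(A-\epsilon_1 Y_1)\supseteq\Bminus(A-\epsilon_1 Y_1)$ for all sufficiently small $\epsilon_1>0$ with $A-\epsilon_1 Y_1$ still big, so Theorem~\ref{thm:main1} combined with Proposition~\ref{prop:compute} places $\epsilon_1\eone$ inside $\Delta_{Y_{\bullet}}(A)$. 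On the face $\nu_1=0$: the restricted flag $W_{\bullet}=Y_{\bullet}|_{Y_1}$ is admissible on $Y_1$ and centered at $x$, while $A|_{Y_1}$ is ample, so the inductive hypothesis furnishes $\epsilon_2>0$ with $\Delta_{\epsilon_2}\subseteq\Delta_{W_{\bullet}}(A|_{Y_1})\subseteq\RR^{n-1}$. Because $Y_1\not\subseteq\Bplus(A)$, the Lazarsfeld--Musta\c t\u a slice identification embeds this $(n-1)$-simplex inside the $\nu_1=0$ face of $\Delta_{Y_{\bullet}}(A)$. Convexity of $\Delta_{Y_{\bullet}}(A)$ applied to $\origin$, $\epsilon_1\eone$, and the $(n-1)$-simplex on the face then gives $\Delta_{\min(\epsilon_1,\epsilon_2)}\subseteq\Delta_{Y_{\bullet}}(A)$.

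The main obstacle is the lifting step inside the ampleness lemma: promoting the inductively produced $(n-1)$-simplex in $\Delta_{W_{\bullet}}(A|_{Y_1})$ to a genuine simplex inside the $\nu_1=0$ slice of $\Delta_{Y_{\bullet}}(A)$. The clean route goes through the Lazarsfeld--Musta\c t\u a restricted-body formalism and the asymptotic surjectivity of the restriction map $H^0(X,kA)\to H^0(Y_1,kA|_{Y_1})$ available when $Y_1\not\subseteq\Bplus(A)$; if one prefers a self-contained argument, each vertex $\epsilon\ei$ can be produced by hand using Seshadri-type estimates to control valuations along the flag.
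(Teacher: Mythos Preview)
Your argument is correct, and the implications $(3)\Rightarrow(2)\Rightarrow(1)$ match the paper's proof almost verbatim (the paper quotes \cite{ELMNP1}*{Proposition~1.21} to get $\Bminus(D-\epsilon Y_1)=\Bplus(D)$ directly, whereas you unwind this by hand, but the content is identical).

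For $(1)\Rightarrow(3)$ you take a genuinely different route. The paper bypasses your inductive ``ampleness lemma'' entirely by proving a short elementary fact (Lemma~\ref{lem:2}): for any admissible flag $Y_\bullet$ and any ample Cartier divisor $A$, for $m\gg 0$ there exist sections $s_0,\dots,s_n\in H^0(X,\sO_X(mA))$ with $\nu_{Y_\bullet}(s_0)=\origin$ and $\nu_{Y_\bullet}(s_i)=\ei$. This is a one-step Serre vanishing argument: surjectivity of $H^0(X,\sO_X(mA))\to H^0(Y_i,\sO_{Y_i}(mA))$ lets one lift a section of $|mA|_{Y_i}-Y_{i+1}|$ not vanishing at $x$. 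Tensoring these $s_i$ with a section $s\in H^0(X,\sO_X(m(D-A)))$ satisfying $s(x)\neq 0$ immediately places $\origin,\tfrac{1}{m}\eone,\dots,\tfrac{1}{m}\en$ inside $\Delta_{Y_\bullet}(D)$, hence $\Delta_{1/m}\subseteq\Delta_{Y_\bullet}(D)$ by convexity.

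Compared to your approach, this buys two things. First, it is self-contained: no induction on $\dim X$, no appeal to the Lazarsfeld--Musta\c t\u a restricted-body identification, and no asymptotic surjectivity needed beyond ordinary Serre vanishing. Second, it sidesteps the issue you would meet in the inductive step, namely that $Y_1$ need not be smooth, so your inductive hypothesis has to be stated and proved for singular projective varieties (the slice formalism in \cite{LM} carries hypotheses you would have to track). Your approach, on the other hand, is more structural: it isolates the ample case as the essential one and reduces the general statement to Minkowski additivity plus Theorem~\ref{thm:main1}. Both yield the same $\epsilon$ up to constants, but the paper's construction of explicit witnessing sections is shorter and is what makes the result go through uniformly on arbitrary projective varieties (cf.\ Remark~\ref{rmk:non-smooth ample}).
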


\begin{corollary}\label{cor:ample}
Let $X$ be a smooth projective variety, $D$ a big $\RR$-divisor  on $X$. Then the following are equivalent.  
\begin{enumerate}
 \item $D$ is ample.
 \item For every point $x\in X$ there exists an admissible flag $Y_{\bullet}$ centered at $x$ with $Y_1$ ample  such that $\Delta_{\epsilon_0} 
\subseteq \Delta_{Y_{\bullet}}(D)$ for some $\epsilon_0 >0$. 
\item  For every  admissible flag $Y_{\bullet}$  there exists $\epsilon >0$ (possibly depending on $\ybul$) 
 such that $\Delta_{\epsilon}\subseteq \Delta_{Y_{\bullet}}(D)$.
\end{enumerate}
\end{corollary}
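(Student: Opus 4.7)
The plan is to deduce Corollary~\ref{cor:ample} directly from Theorem~\ref{thm:main2} by a pointwise argument, using the standard characterization that a divisor $D$ is ample if and only if $\Bplus(D) = \varnothing$, equivalently if and only if $x \notin \Bplus(D)$ for every closed point $x \in X$ (see \cite{ELMNP1}).

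Given this reformulation of ampleness, the implication $(1)\Rightarrow(2)$ becomes immediate. For each $x \in X$, since $D$ is ample we have $x \notin \Bplus(D)$, so the equivalence $(1)\Leftrightarrow(2)$ of Theorem~\ref{thm:main2} produces an admissible flag $\ybul$ centered at $x$ with $Y_1$ ample and some $\epsilon_0 > 0$ such that $\Delta_{\epsilon_0} \subseteq \Delta_{\ybul}(D)$. Conversely, $(2)\Rightarrow(1)$ follows by running the same equivalence in the opposite direction: the hypothesis supplies, for each $x\in X$, a flag fulfilling condition $(2)$ of Theorem~\ref{thm:main2}, forcing $x \notin \Bplus(D)$; letting $x$ vary over all of $X$ gives $\Bplus(D) = \varnothing$, whence $D$ is ample.

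The equivalence $(1)\Leftrightarrow(3)$ is handled by the same two-sided argument, this time invoking $(1)\Leftrightarrow(3)$ of Theorem~\ref{thm:main2}. The only small formal verification needed is that quantifying over all admissible flags on $X$ (as in the corollary) agrees with quantifying, for each $x \in X$, over all admissible flags centered at $x$ (as in the theorem); this is clear since every admissible flag has a unique center $Y_n$, and conversely every point arises as such a center.

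I do not expect any real obstacle here: the geometric substance has been entirely absorbed into Theorem~\ref{thm:main2}, and the corollary is obtained simply by quantifying over all points of $X$ and applying the standard $\Bplus(D)=\varnothing$ criterion for ampleness. The only mild subtlety is the reshuffling of quantifiers between "for every $x$, some/every flag centered at $x$" and "for every flag on $X$", which as noted is purely formal.
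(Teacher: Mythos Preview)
Your proposal is correct and matches the paper's own argument essentially verbatim: the paper simply says the corollary follows immediately from Theorem~\ref{thm:main2} together with \cite{ELMNP1}*{Example 1.7}, which is exactly the $\Bplus(D)=\varnothing$ characterization of ampleness you invoke. Your remark on the quantifier reshuffling is a helpful bit of bookkeeping the paper leaves implicit.
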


\begin{proof}[Proof of Corollary~\ref{cor:ample}]
Follows immediately from Theorem~\ref{thm:main2} and \cite{ELMNP1}*{Example 1.7}. 
\end{proof}

One can see  Corollary~\ref{cor:ample} as a variant of Seshadri's criterion for ampleness in the language of convex geometry. 

\begin{remark}
It is shown in \cite{KL}*{Theorem 2.4} and \cite{KL}*{Theorem A} that  in dimension two one can in fact discard the condition above that $Y_1$ should be 
ample. Note that the proofs of the cited results  rely heavily on  surface-specific tools and in general follow a line of thought different from the present one.
\end{remark}

We first prove a helpful lemma.

\begin{lemma}\label{lem:2}
Let $X$ be a projective variety (not necessarily smooth), $A$  an ample Cartier divisor, $Y_{\bullet}$ an admissible flag on $X$. 
Then for all $m>>0$ there exist global sections 
$s_0,\ldots ,s_n\in H^0(X,\sO_X(mA))$ for which 
\[
\nu_{Y_{\bullet}}(s_0) = \origin \textup{ and } \nu_{Y_{\bullet}}(s_i)=\ei, \textup{ for each } i=1,\ldots ,n,
\]
where $\{ \eone,\ldots ,\en\}\subseteq \RR^n$ denotes the standard basis.
\end{lemma}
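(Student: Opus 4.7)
The plan is to realize each $s_i$ as a global section of $\sO_X(mA)$ lying in an appropriately chosen subsheaf with a prescribed class modulo $\mathfrak{m}_{Y_n}$, and then to verify $\nu_{Y_\bullet}(s_i)$ by a direct local computation at $Y_n$. Admissibility of $Y_\bullet$ forces $Y_n$ to be a smooth point of $X$ and each $Y_j$ to be smooth of codimension $j$ there, so one can choose regular parameters $x_1,\ldots,x_n\in\mathfrak{m}_{Y_n}$ with $\mathcal{I}_{Y_j,Y_n}=(x_1,\ldots,x_j)$ for every $j$. Under this choice the valuation $\nu_{Y_\bullet}$ on the completion $\widehat{\sO}_{X,Y_n}\cong\CC[[x_1,\ldots,x_n]]$ is nothing but the lex-first-monomial valuation; unwinding, $\nu_{Y_\bullet}(f)=\ei$ if and only if (a) $f\in\mathcal{I}_{Y_i,Y_n}$ (so that no monomial purely in $x_{i+1},\ldots,x_n$ appears in the expansion of $f$) and (b) the coefficient of $x_i$ itself in $f$ is nonzero.

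The construction then proceeds as follows. For $s_0$: since $A$ is ample, for $m\gg 0$ the sheaf $\sO_X(mA)$ is globally generated, so one can pick $s_0$ with $s_0(Y_n)\neq 0$, whence $\nu_{Y_\bullet}(s_0)=\origin$. For $i\geq 1$ the idea is to apply Serre vanishing to the short exact sequence
\[
0\longrightarrow \mathfrak{m}_{Y_n}\mathcal{I}_{Y_i}\otimes \sO_X(mA)\longrightarrow \mathcal{I}_{Y_i}\otimes \sO_X(mA)\longrightarrow \bigl(\mathcal{I}_{Y_i}/\mathfrak{m}_{Y_n}\mathcal{I}_{Y_i}\bigr)\otimes \sO_X(mA)\longrightarrow 0,
\]
whose rightmost term is a skyscraper sheaf at $Y_n$ of $\CC$-length $i$, because $\mathcal{I}_{Y_i,Y_n}$ is generated by the regular sequence $x_1,\ldots,x_i$. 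For $m\gg 0$ the $H^1$ of the leftmost sheaf vanishes, so the induced map on $H^0$ surjects onto the fiber, which after trivializing $\sO_X(mA)$ at $Y_n$ has basis $\bar x_1,\ldots,\bar x_i$; I take $s_i$ to be any preimage of $\bar x_i$.

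It then remains to verify that $\nu_{Y_\bullet}(s_i)=\ei$. By construction $s_i$ globally lies in $\mathcal{I}_{Y_i}$, so condition (a) holds. Moreover $s_i-x_i\in\mathfrak{m}_{Y_n}\mathcal{I}_{Y_i,Y_n}\subseteq\mathfrak{m}_{Y_n}^2$, so the correction contributes no linear term and the coefficient of $x_i$ in the local expansion of $s_i$ equals $1$, establishing (b). The main technical input is Serre vanishing, which is entirely standard; the only care one must take is to choose a single $m\gg 0$ that simultaneously secures the global generation of $\sO_X(mA)$ at $Y_n$ and the vanishing of $H^1\bigl(X,\mathfrak{m}_{Y_n}\mathcal{I}_{Y_i}\otimes \sO_X(mA)\bigr)$ for each of the finitely many indices $i=1,\ldots,n$, which is possible.
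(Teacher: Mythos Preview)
Your argument is correct, and it takes a genuinely different route from the paper's for the sections $s_i$ with $i\geq 1$. The paper works by restriction to an intermediate flag element: Serre vanishing for $\sI_{Y_i|X}$ makes $H^0(X,\sO_X(mA))\to H^0(Y_i,\sO_{Y_i}(mA))$ surjective, and for $m\gg 0$ the linear system $|mA|_{Y_i}-Y_{i+1}|$ is very ample on $Y_i$, so one can pick a section $\tilde s_i$ of it nonvanishing at $x$ and at an auxiliary smooth point $y\in Y_i\setminus Y_{i+1}$; multiplying by the equation of $Y_{i+1}$ and lifting to $X$ gives $s_i$, where nonvanishing at $y$ controls the first block of coordinates of $\nu_{Y_\bullet}$ and the order-one vanishing along $Y_{i+1}$ (with $\tilde s_i(x)\neq 0$) controls the rest. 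You instead stay on $X$ throughout and apply a single Serre vanishing to the product ideal $\mathfrak{m}_{Y_n}\sI_{Y_i}$, reading off $\nu_{Y_\bullet}(s_i)$ directly from the class of $s_i$ in the length-$i$ fiber $\sI_{Y_i}/\mathfrak{m}_{Y_n}\sI_{Y_i}$ at $Y_n$. This is more economical---no very ampleness on subvarieties, no auxiliary point $y$---but it leans on the identification of $\nu_{Y_\bullet}$ with the lex-lowest-monomial valuation in adapted regular parameters at $Y_n$, which is standard but which the paper's more hands-on argument avoids. Both arguments work equally well on a possibly singular $X$, since only the local picture near the smooth point $Y_n$ enters.
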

\begin{proof}
First, we point out that by the admissibility of the flag $\ybul$, we know that there is an open neighbourhood $sU$ of $x$ such that $Y_i|_{\sU}$ is 
smooth for all $0\leq i\leq n$. 

Since $A$ is ample,  $\sO_X(mA)$  becomes globally generated for  $m>>0$. For all such  $m$ like  there exists a non-zero section 
$s_0\in H^0(X,\sO_X(mA))$ with $s_0(Y_n)\neq 0$,  in particular,  $\nu_{Y_{\bullet}}(s_0)=\origin$, as required. 

It remains to show that for all $m>>0$ and  $i=1\leq i\leq n$ we can find  non-zero sections  $s_i\in H^0(X,\sO_X(mA))$ with 
$\nu_{Y_{\bullet}}(s_i)=\ei$. To this end, fix $i$ and let $y\in Y_i\setminus Y_{i+1}$ be a smooth point. Having chosen  $m$ large enough,  
Serre vanishing yields $H^1(X,\sI_{Y_i|X}\otimes \sO_X(mA))=0$, hence  the map $\phi_m$ in  the diagram 
\[
\xymatrix{
& & H^0(X,\sO_X(mA)) \ar[d]_{\phi_m}  \\
 0\ar[r] &  H^0(Y_i,\sO_{Y_i}(m(A|_{Y_i})-Y_{i+1})) \ar[r]^<<<<{\psi_m} & H^0(Y_i,\sO_{Y_i}(mA))  }
\]
is surjective. 

Again, by making $m$ high enough,  we can assume  $|m(A|_{Y_i})-Y_{i+1}|$ to be  very ample on $Y_i$, thus,  there will exist 
$0\neq\tilde{s}_i\in H^0(Y_i,\sO_{Y_i}(mA)\otimes \sO_{Y_i}(-Y_{i+1}))$ not  vanishing  at $x$ or $y$. Since $\tilde{s}_i(x)\neq 0$, the section
$\tilde{s}_i$  does not  vanish along $Y_j$ for all $j=i+1,\ldots ,n$. Also, the image   
$\psi_m(\tilde{s}_i)\in H^0(Y_i,\sO_{Y_i}(mA))$ of $\tilde{s}_i$  vanishes at $x$, but not at the point  $y$.

By the surjectivity of the map  $\phi_m$  there exists a section $s_i\in H^0(X,\sO_X(mA))$ such that $s|_{Y_i}=\psi_m(\tilde{s}_i)$ and $s(y)\neq 0$. 
In particular, $s_i$ does not vanish along any of the $Y_j$'s for  $1\leq i\leq j$, therefore  $\nu_{Y_{\bullet}}(s)=\ei$, as promised. 
\end{proof}

\begin{proof}[Proof of Theorem~\ref{thm:main2}]
$(1)\Rightarrow (3)$. First we treat the case when $D$ is $\QQ$-Cartier. Assume that  $x\notin \Bplus(D)$, which implies by definition that  
$x\notin\Bstable(D-A)$ for some small ample $\QQ$-Cartier  divisor $A$. Choose a positive integer  $m$ large and divisible enough  such that $mA$ 
becomes integral,  and satisfies the conclusions of  Lemma~\ref{lem:2}. Assume furthermore that $\Bstable(D-A)=\textup{Bs}(m(D-A))$ set-theoretically.  

Since $x\notin \textup{Bs}(m(D-A))$,  there exists a section $s\in H^0(X,\sO_X(mD-mA))$ not vanishing at $x$,  and
in particular $\nu_{Y_{\bullet}}(s)=\origin$. At the same time,  Lemma~\ref{lem:2} provides the existence of  global  sections 
$s_0,\ldots, s_n\in H^0(X,\sO_X(mA))$ with the property that $\nu_{Y_{\bullet}}(s_0)\equ \origin$ and $\nu_{Y_{\bullet}}(s_i)=\ei$ for all 
$1\leq i\leq n$. 

But then the multiplicativity of the valuation map $\nu_{\ybul}$ gives 
\[
\nu_{Y_{\bullet}}(s\otimes s_0)=\textbf{\textup{0}}, \textup{ and } \nu_{Y_{\bullet}}(s\otimes s_i)=\ei \ \ \text{for all $1\leq i\leq n$.}
\]
By the construction of Newton--Okounkov bodies, then $\Delta_{\frac{1}{m}}\subseteq \Delta_{Y_{\bullet}}(D)$. 

Next, let $D$ be a big $\RR$-divisor for which $x\notin\Bplus(D)$, and let  $A$ be an ample $\RR$-divisor with the property that 
$D-A$ is a $\QQ$-divisor and $\Bplus(D)=\Bplus(D-A)$. Then we have $x\notin\Bplus(D-A)$, therefore 
\[
\Delta_\epsilon\dsubseteq \Delta_{\ybul}(D-A) \dsubseteq \Delta_{\ybul}(D) 
\]
according to the $\QQ$-Cartier case and  Lemma~\ref{lem:nested}. 

Again, the implication $(3)\Rightarrow (2)$ is trivial, hence we only need to take care of $(2)\Rightarrow (1)$. 
As $Y_1$ is ample, \cite{ELMNP1}*{Proposition 1.21} gives  the equality $\Bminus(D-\epsilon Y_1)=\Bplus(D)$.  for all  $0<\epsilon <<1$. 
Fix an $\epsilon$ as above, subject to the additional condition that $D-\epsilon Y_1$ is a big $\QQ$-divisor. Then, according to 
Proposition~\ref{prop:compute},  we have 
\[
\Delta_{Y_{\bullet}}(D)_{\nu_1\geq \epsilon} \ = \ \Delta_{Y_{\bullet}}(D-\epsilon Y_1)\ + \ \epsilon \eone \ ,
\]
which yields $\origin\in \Delta_{Y_{\bullet}}(D-\epsilon Y_1)$. By Theorem~\ref{thm:main1}, this means that  
$x\notin\Bminus(D-\epsilon Y_1)=\Bplus(D)$, which completes the proof. 
\end{proof}
 
\begin{remark}\label{rmk:non-smooth ample}
The condition that $X$ be smooth can again be dropped for the implication $(1)\Rightarrow (3)$ both in Theorem~\ref{thm:main2} and 
Corollary~\ref{cor:ample} (cf. Remark~\ref{rmk:non-smooth nef}). This way, one obtains the statement that whenever $A$ is an ample $\RR$-Cartier 
divisor on a projective variety $X$,  then every Newton--Okounkov body of $A$ contains a small simplex. 
\end{remark}
  
As a consequence, we can extend \cite{KL}*{Definition 4.5} to all dimensions.   
 
\begin{definition}[Largest simplex constant]
Let $X$ be an arbitrary projective variety, $x\in X$ a smooth point, $A$ an ample $\RR$-divisor on $X$. For an admissible flag  $\ybul$ on $X$ 
centered at $x$, we set 
\[
 \lambda_{\ybul}(A;x) \deq \sup\st{\lambda>0\mid \Delta_\lambda\subseteq\Delta_{\ybul}(A)}\ .
\]
Then the \emph{largest simplex constant} $\lambda(A;x)$ is defined as 
\[
 \lambda(A;x) \deq \sup \st{\lambda_{\ybul}(A;x)\mid \text{ $\ybul$ is an admissible flag centered at $x$}}\ .
\]
\end{definition}

\begin{remark}
It follows from Remark~\ref{rmk:non-smooth ample} that $\lambda(A;x)>0$. The largest simplex constant is a measure of local positivity, and  it is known 
in dimension two that $\lambda(A;x)\leq \epsilon(A;x)$ (where the right-hand side denotes the appropriate  Seshadri constant) with strict inequality in general
(cf. \cite{KL}*{Proposition 4.7} and \cite{KL}*{Remark 4.9}).
\end{remark}

We end this section with a different characterization of $\Bplus(D)$ which puts no restriction on the flags. In what follows $X$ is again assumed to 
be smooth. 

\begin{lemma}\label{lem:aug}
For a point $x\in X$, $x\notin\Bplus(D)$ holds if and only if  
\begin{equation}\label{eq:augmented}
\lim_{p\rightarrow\infty} \mult_x(||pD-A||) \equ 0
\end{equation}
for some ample divisor $A$. 
\end{lemma}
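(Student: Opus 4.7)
The plan is to prove both implications by combining subadditivity of the asymptotic multiplicity with Proposition~\ref{prop:ELMNP} and the characterization of $\Bplus(D)$ via ample perturbations of $D$.

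\emph{Forward direction.} Suppose $x\notin\Bplus(D)$. By definition of the augmented base locus, there exists an ample $\QQ$-divisor $A$ with $x\notin\Bstable(D-A)$; since $\Bminus(D-A)\subseteq\Bstable(D-A)$, Proposition~\ref{prop:ELMNP} gives $\mult_x(||D-A||)=0$, while the inclusion $\Bminus(D)\subseteq\Bplus(D)$ together with the same proposition yields $\mult_x(||D||)=0$. Subadditivity of $\mult_x(||\cdot||)$, which follows from tensoring sections, applied to the decomposition $pD-A=(D-A)+(p-1)D$ then produces
\[
 \mult_x(||pD-A||) \,\leq\, \mult_x(||D-A||)+(p-1)\,\mult_x(||D||) \,=\, 0
\]
for every $p\geq 1$, so the limit is trivially zero.

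\emph{Backward direction.} Assume $\lim_{p\to\infty}\mult_x(||pD-A||)=0$ for some ample $A$. Writing $pD=(pD-A)+A$ and using $\mult_x(||A||)=0$ (since $\Bminus(A)=\varnothing$), subadditivity gives $p\,\mult_x(||D||)\leq\mult_x(||pD-A||)$, so the hypothesis forces $\mult_x(||D||)=0$ and hence $x\notin\Bminus(D)$. Subadditivity applied to $(p+1)D-A=(pD-A)+D$ now shows that $\mult_x(||pD-A||)$ is nonincreasing in $p$, so its limit equals its infimum. Once we exhibit some $p_0$ with $\mult_x(||p_0D-A||)=0$, Proposition~\ref{prop:ELMNP} together with scale invariance of $\Bminus$ yields $x\notin\Bminus(p_0D-A)=\Bminus(D-A/p_0)$; for $p_0\gg 0$ the class $A/p_0$ is small enough that \cite{ELMNP1}*{Proposition~1.21} identifies $\Bminus(D-A/p_0)$ with $\Bplus(D)$, giving $x\notin\Bplus(D)$.

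\emph{Main obstacle.} The delicate point is extracting a finite $p_0$ with $\mult_x(||p_0D-A||)=0$, since a nonincreasing sequence of positive reals can a priori approach $0$ without attaining it. I would handle this by rewriting the limit via homogeneity as $\lim_p p\,g(1/p)=g'(0^+)$, where $g(t):=\mult_x(||D-tA||)$ is convex, nondecreasing, with $g(0)=0$, and then exploiting the piecewise-linear behaviour of Nakayama's $\sigma$-function along the ray $D-tA$ (accessible by passing to the blowup $\pi\colon\widetilde{X}\to X$ at $x$, under which $\mult_x$ becomes $\ord_E$ for the exceptional divisor $E$): piecewise linearity then forces $g\equiv 0$ on some interval $[0,\epsilon_0]$, whence $x\notin\Bminus(D-\epsilon_0 A)=\Bplus(D)$ by Proposition~\ref{prop:ELMNP} and \cite{ELMNP1}*{Proposition~1.21}, bypassing the need for a single $p_0$ altogether.
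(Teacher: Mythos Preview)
Your forward direction is correct. The paper argues slightly differently---it observes that $\Bplus(D)=\Bstable(pD-A)$ for $p\gg 0$, so $x\notin\Bplus(D)$ gives $x\notin\Bstable(pD-A)$ directly and hence $\mult_x(\|pD-A\|)=0$ for all large $p$---but your subadditivity route is equally valid and arguably cleaner.

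The backward direction, however, has a genuine gap. You correctly isolate the obstacle: a convex, nondecreasing function $g$ with $g(0)=0$ and $g'(0^+)=0$ need not vanish on any interval (take $g(t)=t^2$), so convexity alone cannot produce the $\epsilon_0$ you need. Your proposed remedy is to invoke piecewise linearity of $t\mapsto\sigma_E(\pi^*D-t\pi^*A)$ on the blow-up, but this is simply not available: Nakayama's $\sigma$-invariants are continuous and convex on the big cone, yet in dimension $\geq 3$ they are \emph{not} piecewise linear in general---there is no chamber decomposition of the big cone analogous to the Zariski chamber structure on surfaces on which $\sigma_\Gamma$ would be linear. Nothing in the setup singles out the particular ray $\pi^*(D-tA)$ as special in this respect. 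With piecewise linearity gone, the conclusion that $g\equiv 0$ on some $[0,\epsilon_0]$ evaporates, and with it your access to $x\notin\Bminus(D-\epsilon_0 A)=\Bplus(D)$.

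The paper does not attempt a self-contained argument for this implication at all: it simply cites \cite{ELMNP3}*{Lemma~5.2}. That lemma supplies exactly the passage from the limit condition to $x\notin\Bplus(D)$, and its proof uses different ideas. The honest fix is either to cite that result, or to provide a genuine argument bridging $g'(0^+)=0$ and the vanishing of $g$ near $0$---which is real work, not a formality.
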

\begin{proof}
Assuming (\ref{eq:augmented}), $x\notin\Bplus(D)$ follows from \cite{ELMNP3}*{Lemma~5.2}. For the converse implication, consider the equalities
\[
 \Bplus(D) \equ \Bminus(D-\frac{1}{p}A) \equ \textup{\textbf{B}}(pD-A)
\]
which hold for integers $p\gg 0$. Hence, if $x\notin \Bplus(D)$, then $x\notin\textup{\textbf{B}}(pD-A)$ for all $p\gg 0$. But this latter condition
implies $\mult_x(||pD-A||)=0$ for all $p\gg 0$ for all $p\gg 0$. 
\end{proof}

 
\begin{proposition}\label{prop:augmented1}
A point $x\notin\Bplus(D)$ if and only if there exists an admissible flag $Y_{\bullet}$ based at $x$ satisfying  the property that for any $\epsilon>0$ there exists a natural number $p_{\epsilon}>0$ such that 
\[
\Delta_{\epsilon} \ \bigcap \Delta_{Y_{\bullet}}(pD-A) \ \neq \ \varnothing
\]
for any $p\geq p_{\epsilon}$.
\end{proposition}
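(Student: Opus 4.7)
My plan is to use Lemma~\ref{lem:aug} as the bridge between $x\notin\Bplus(D)$ and the vanishing of $\lim_{p\to\infty}\mult_x(\|pD-A\|)$, and then to translate this asymptotic-multiplicity condition into convex-geometric statements about $\Delta_{\ybul}(pD-A)$ via Theorem~\ref{thm:main1} (for the forward implication) and Proposition~\ref{prop:1} (for the reverse implication).

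For the forward direction I would start from $x\notin\Bplus(D)$ and use the fact that $\Bplus(D)=\Bminus(D-\delta A)$ for all sufficiently small $\delta>0$, combined with positive-scaling invariance of $\Bminus$ and monotonicity with respect to adding ample classes, to conclude $x\notin\Bminus(pD-A)$ for every $p\gg 0$. Since $pD-A$ is big for such $p$, Theorem~\ref{thm:main1} delivers $\origin\in\Delta_{\ybul}(pD-A)$ for \emph{any} admissible flag $\ybul$ centered at $x$. Because $\origin\in\Delta_\epsilon$ for every $\epsilon>0$, the intersection $\Delta_\epsilon\cap\Delta_{\ybul}(pD-A)$ contains the origin and is therefore non-empty. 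In fact this works for an arbitrary admissible flag centered at $x$, which is stronger than what the statement requires.

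For the reverse direction I would fix the flag $\ybul$ supplied by the hypothesis. Given $\epsilon>0$ and $p\geq p_\epsilon$, every point $v=(v_1,\ldots,v_n)$ of $\Delta_\epsilon\cap\Delta_{\ybul}(pD-A)$ satisfies $v_1+\cdots+v_n\leq\epsilon$, so $\min\sigma_{pD-A}\leq\epsilon$. Proposition~\ref{prop:1} applied to the big divisor $pD-A$ then yields $\mult_x(\|pD-A\|)\leq\epsilon$ for every $p\geq p_\epsilon$. Letting $\epsilon\to 0$ gives $\lim_{p\to\infty}\mult_x(\|pD-A\|)=0$, and Lemma~\ref{lem:aug} concludes $x\notin\Bplus(D)$.

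The main obstacle I anticipate is largely bookkeeping: Proposition~\ref{prop:1} was stated for big $\QQ$-divisors, so if $D$ is only an $\RR$-divisor one must either restrict to integers $p$ making $pD-A$ a big $\QQ$-divisor, or pass to a $\QQ$-approximation using continuity of Newton--Okounkov bodies (Lemma~\ref{lem:nested}). No new ideas are needed beyond the characterization of $\Bminus$ via the origin of the Newton--Okounkov body already established in Theorem~\ref{thm:main1}; the content of the proposition is really the observation that the convex geometry of $\Delta_{\ybul}(pD-A)$ controls $\min\sigma_{pD-A}$ uniformly enough to force $\mult_x(\|pD-A\|)\to 0$.
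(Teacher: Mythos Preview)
Your proposal is correct and follows essentially the same route as the paper: for the forward direction the paper also invokes \cite{ELMNP1}*{Proposition 1.21} to get $\Bplus(D)=\Bminus(pD-A)$ for $p\gg 0$ and then applies Theorem~\ref{thm:main1} to place the origin in $\Delta_{\ybul}(pD-A)$, while for the converse it uses Proposition~\ref{prop:1} to bound $\mult_x(\|pD-A\|)\leq\min\sigma_{pD-A}$ and then Lemma~\ref{lem:aug}. Your remark about the $\QQ$-divisor hypothesis in Proposition~\ref{prop:1} is a point the paper does not address explicitly.
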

\begin{proof}
Assume first that $x\notin \Bplus(D)$. Again, by \cite{ELMNP1}*{Proposition 1.21}, we have $\Bplus(D)=\Bminus(D-\tfrac{1}{p}A)=\Bminus(pD-A)$ 
for all $p\gg 0$. Then $x\notin \Bminus(pD-A)$ for all $p\gg 0$, hence $\origin\in\Delta_{\ybul}(pD-A)$ for all $p\gg 0$ by Theorem~\ref{thm:main1}, which
implies $\Delta_{\epsilon} \cap \Delta_{Y_{\bullet}}(pD-A) \neq \varnothing$ for all $p\gg 0$. 

As far as  the converse implication goes,  Proposition~\ref{prop:1} shows that 
\[
\mult_x(||pD-A||) \dleq \min \sigma_{pD-A}.\ , 
\] 
hence the condition in the statement implies  $\lim_{p\rightarrow\infty} \mult_x(||pD-A||)=  0$. But then we are done by Lemma~\ref{lem:aug}. 
\end{proof}


\section{Nakayama's divisorial Zariski decomposition and Newton--Okounkov bodies}

In the previous sections we saw the basic connections between Newton--Okounkov bodies associated to a big line bundle $D$ and the asymptotic base loci $\Bplus(D)$ and $\Bminus(D)$.  In \cite{Nak}, Nakayama performes a deep study of these loci, he shows for instance 
that $\Bminus(D)$ can only have finitely many divisororial components. Along the way he introduces his $\sigma$-invariant, which measures the 
asymptotic multiplicity of divisorial components of $\Bminus(D)$. 

The goal of this section is to study the connection between divisorial Zariski decomposition and Newton--Okounkov bodies. 
First, we briefly recall the divisorial Zariski decomposition or $\sigma$-decomposition introduced by Nakayama \cite{Nak} and Boucksom \cite{Bou2}. 

Let $X$ be a smooth projective variety, $D$ a pseudo-effective $\RR$-divisor on $X$. Although $\Bminus(D)$ is a countable union of closed 
subvarieties, \cite{Nak}*{Theorem 3.1} shows that it only has finitely many divisorial components. 

Let $A$ be an ample divisor. Following Nakayama, for each prime divisor $\Gamma$ on $X$ we set
\[
\sigma_{\Gamma} \deq  \lim_{\epsilon\rightarrow 0^+}\textup{inf}\{\mult_{\Gamma}(D') \mid D'\sim_{\RR}D+\epsilon A \textup{ and } D'\geq 0\}\ .
\]
In \cite{Nak}*{Theorem III.1.5}, Nakayama shows that these numbers do not depend on the choice of $A$ and that there are only finitely many prime 
divisors $\Gamma$ with $\sigma_{\Gamma}(D)>0$. Write  
\[
N_{\sigma}(\Gamma)\deq \sum_{\Gamma}\sigma_{\Gamma}(D)\Gamma \textup{ and } P_{\sigma}(D)=D-N_{\sigma}(D) \ ,
\] 
and we call $D=P_{\sigma}(D)+N_{\sigma}(D)$ \emph{the divisorial Zariski decomposition} or \emph{$\sigma$-decomposition} of $D$. 
In dimension two divisorial Zariski decomposition coincides with  the usual Fujita--Zariski decomposition for  pseudo-effective divisors.

The main properties  are captured in the following statement.

\begin{theorem}\cite{Nak}*{III.1.4, III.1.9, V.1.3}\label{prop:Nakayama}
Let $D$ be a pseduo-effective $\RR$-disivor. Then
\begin{enumerate}
\item $N_{\sigma}(D)$ is effective and $\Supp(N_{\sigma}(D))$ coincides with the divisorial part of $\Bminus(D)$.
\item For all $m\geq 0$, $H^0(X,\sO_X(\lfloor mP_{\sigma}(D)\rfloor))\simeq H^0(X,\sO_X(\lfloor mD\rfloor))$.
\end{enumerate}
\end{theorem}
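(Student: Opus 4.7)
The statement splits into two essentially independent parts, which I would attack separately.

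For part (1), effectivity of $N_\sigma(D)$ is immediate since each $\sigma_\Gamma(D)$ is a limit of infima of non-negative multiplicities; the nontrivial ingredient is finiteness of the sum, which I would take for granted from \cite{Nak}*{III.1.4}. For the support characterization, the forward inclusion $\Supp(N_\sigma(D))\subseteq$ (divisorial part of $\Bminus(D)$) is direct: if $\sigma_\Gamma(D)>0$, then for every ample $A$ and every sufficiently small $\epsilon>0$ each effective $D'\equiv D+\epsilon A$ has $\mult_\Gamma(D')\geq \tfrac12\sigma_\Gamma(D)>0$, forcing $\Gamma\subseteq\textbf{B}(D+\epsilon A)\subseteq\Bminus(D)$. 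The converse begins with the identity $\Bminus(D)=\bigcup_m\textbf{B}(D+\tfrac{1}{m}A)$ from \cite{ELMNP1}*{Proposition~1.19}, placing $\Gamma$ inside some $\textbf{B}(D+\tfrac{1}{m}A)$ and hence in the support of every effective representative of $N(D+\tfrac{1}{m}A)$ for $N$ sufficiently divisible.

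I expect this converse direction to be the main obstacle: one must upgrade the pointwise positivity $\mult_\Gamma(D')\geq 1/N$ coming from integer linear systems to a uniform positive infimum over all $\RR$-linearly equivalent effective divisors, and then track stability of that bound as $\epsilon\to 0^+$. The cleanest fix is a continuity statement for $\sigma_\Gamma$ on the big cone; I would follow Nakayama's argument in \cite{Nak}*{III.1.9} at this step rather than reprove it.

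For part (2), the plan is to exhibit multiplication by the canonical section $s_N\in H^0\bigl(X,\sO_X(\lfloor mN_\sigma(D)\rfloor)\bigr)$ as the desired isomorphism
\[
H^0\bigl(X,\sO_X(\lfloor mP_\sigma(D)\rfloor)\bigr)\ \stackrel{\cdot\,s_N}{\longrightarrow}\ H^0\bigl(X,\sO_X(\lfloor mD\rfloor)\bigr).
\]
Well-definedness follows from the elementary coefficient-wise floor inequality $\lfloor x\rfloor+\lfloor y\rfloor\leq\lfloor x+y\rfloor$, and injectivity is automatic. For surjectivity the key input is homogeneity $\sigma_\Gamma(mD)=m\sigma_\Gamma(D)$ combined with the formula $\sigma_\Gamma(D)=\inf\{\mult_\Gamma(D'):D'\sim_\RR D,\ D'\geq 0\}$ valid for big $D$. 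This forces every section $s\in H^0(\sO_X(\lfloor mD\rfloor))$ to satisfy $(s)\geq mN_\sigma(D)\geq \lfloor mN_\sigma(D)\rfloor$, so $s$ factors as $s_N\cdot s'$. A final floor-arithmetic check closes the argument: the possible gap divisor $\lfloor mD\rfloor-\lfloor mP_\sigma(D)\rfloor-\lfloor mN_\sigma(D)\rfloor$ has $\{0,1\}$-coefficients and is supported on $\Supp(N_\sigma(D))$, and the slightly sharper \emph{real} vanishing $(s)\geq mN_\sigma(D)$ is exactly enough to absorb those extra ones, placing $s'$ in the correct subspace.
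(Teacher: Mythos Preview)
The paper does not supply its own proof of this statement: it is quoted from Nakayama's book (the citations III.1.4, III.1.9, V.1.3 in the theorem header) and used as a black box in Section~4. So there is nothing on the paper's side to compare your argument against; what you have written is a reasonable reconstruction of Nakayama's line of proof, with appropriate deferrals to \cite{Nak} at the genuinely hard steps.

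One correction to part~(2): the assertion $(s)\geq mN_\sigma(D)$ is not literally true when $\Gamma$ appears in $D$ with non-integral coefficient. Applying the infimum characterization of $\sigma_\Gamma$ to the effective representative $(s)+\{mD\}\equiv mD$ only gives
\[
\mult_\Gamma\bigl((s)\bigr)\ \geq\ m\sigma_\Gamma(D)-\{m\cdot\mult_\Gamma(D)\}\ ,
\]
which can sit strictly below $m\sigma_\Gamma(D)$. The argument is still saved by integrality of $\mult_\Gamma\bigl((s)\bigr)$ together with the identity $\lfloor a\rfloor-\lfloor a-b\rfloor=\lceil b-\{a\}\rceil$, which is precisely the bound needed to land $s'$ in $H^0\bigl(\sO_X(\lfloor mP_\sigma(D)\rfloor)\bigr)$; but the ``sharper real vanishing'' you invoke does not hold as stated. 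Separately, you need not restrict to big $D$: only the one-sided inequality $\mult_\Gamma(D')\geq\sigma_\Gamma(D)$ for effective $D'\equiv D$ is used, and that follows for arbitrary pseudo-effective $D$ directly from the limiting definition (add a small ample divisor avoiding $\Gamma$ and let $\epsilon\to 0$).
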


As  Theorem~\ref{thm:main1} describes how to determine $\Bminus(D)$ from the  Newton--Okounkov bodies associated to $D$, it is natural to wonder how we can 
compute the numbers $\sigma_\Gamma(D)$ and  $N_\sigma(D)$  in terms of convex geometry.
Relying on  Theorem~\ref{thm:main1} and  Nakayama's work, we are able to come up with a reasonable answer. 

\begin{theorem}\label{thm:main3}
Let $D$ be a big $\RR$-divisor,  $\Gamma$ a prime divisor on $X$,  $Y_{\bullet}: Y_0=X\supseteq Y_1=\Gamma\supseteq\ldots  \supseteq Y_n=\{x\}$
an admissible flag on $X$. Then
\begin{enumerate}
\item $\Delta_{Y_{\bullet}}(D) \ \subseteq \ (\sigma_{\Gamma}(D),0\ldots, 0)+\RR_+^n$,
\item $(\sigma_{\Gamma}(D),0\ldots, 0)\ \in \ \Delta_{Y_{\bullet}}(D)$, whenever $x\in \Gamma$ is a very general point.
\item $\Delta_{Y_{\bullet}}(D) = \nu_{Y_{\bullet}}(N_{\sigma}(D)) + \Delta_{Y_{\bullet}}(P_{\sigma}(D))$. Morever, 
$\Delta_{Y_{\bullet}}(D) =\Delta_{Y_{\bullet}}(P_{\sigma}(D))$, when $x\notin\Supp (N_{\sigma}(D))$.
\end{enumerate}
\end{theorem}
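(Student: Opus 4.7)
The plan is to use Proposition~\ref{prop:definition} (the closed convex hull description of $\Delta_{Y_\bullet}(D)$ in terms of valuations of effective representatives) together with Nakayama's basic properties of the $\sigma$-decomposition.

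For part (1), for any $\epsilon>0$ and any effective $D'\sim_{\RR}D+\epsilon A$ one has $\nu_1(D')=\mult_{\Gamma}(D')\geq f(\epsilon)$, where $f(\epsilon)=\inf\{\mult_\Gamma(D'')\mid D''\sim_{\RR}D+\epsilon A,\ D''\geq 0\}$. Hence Proposition~\ref{prop:definition} places $\Delta_{Y_\bullet}(D+\epsilon A)$ inside the half-space $\{\nu_1\geq f(\epsilon)\}$. Since $\Delta_{Y_\bullet}(D)=\bigcap_{\epsilon>0}\Delta_{Y_\bullet}(D+\epsilon A)$ by Lemma~\ref{lem:nested}, and $f(\epsilon)\to\sigma_{\Gamma}(D)$ as $\epsilon\to 0^+$, every point of $\Delta_{Y_\bullet}(D)$ has first coordinate at least $\sigma_{\Gamma}(D)$, which is exactly (1).

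For part (2), set $D_0=D-\sigma_{\Gamma}(D)\Gamma$, so that $D_0=P_{\sigma}(D)+(N_{\sigma}(D)-\sigma_{\Gamma}(D)\Gamma)$. Since $\sigma_{\Gamma}(P_{\sigma}(D))=0$ (as $N_{\sigma}(P_{\sigma}(D))=0$) and $\Gamma$ is not in the support of the effective divisor $N_{\sigma}(D)-\sigma_{\Gamma}(D)\Gamma$, subadditivity of $\sigma_{\Gamma}$ forces $\sigma_{\Gamma}(D_0)=0$. By Theorem~\ref{prop:Nakayama}$(1)$, $\Gamma$ is then not a divisorial component of $\Bminus(D_0)$, so $\Bminus(D_0)\cap\Gamma$ lies in a countable union of proper closed subsets of $\Gamma$; for a very general $x\in\Gamma$ we obtain $x\notin\Bminus(D_0)$, and Theorem~\ref{thm:main1} yields $\origin\in\Delta_{Y_\bullet}(D_0)$. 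Since $P_{\sigma}(D)$ is big by Theorem~\ref{prop:Nakayama}$(2)$ and $D_0\geq P_{\sigma}(D)$, $D_0$ is big, whence $\sigma_{\Gamma}(D)<\mu(D,\Gamma)$; Proposition~\ref{prop:compute} then converts $\origin\in\Delta_{Y_\bullet}(D_0)$ into $(\sigma_{\Gamma}(D),0,\ldots,0)\in\Delta_{Y_\bullet}(D)$.

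For part (3), the inductive construction of $\nu_{Y_\bullet}$ gives additivity on sums of effective $\RR$-divisors, and applying the argument of part (1) to every prime divisor shows that any effective $D'\equiv D$ satisfies $D'\geq N_{\sigma}(D)$. Writing $D'=N_{\sigma}(D)+F$ with $F\geq 0$ and $F\equiv P_{\sigma}(D)$, additivity gives $\nu_{Y_\bullet}(D')=\nu_{Y_\bullet}(N_{\sigma}(D))+\nu_{Y_\bullet}(F)$; passing to closed convex hulls via Proposition~\ref{prop:definition} yields the desired Minkowski sum. For the \emph{moreover} clause, if $x\notin\Supp(N_{\sigma}(D))$ then a straightforward induction along the flag (noting that at each step $Y_i$ cannot be contained in the support of the residual divisor, since $x\in Y_i$ and $x$ is off $\Supp(N_{\sigma}(D))$) shows $\nu_{Y_\bullet}(N_{\sigma}(D))=\origin$. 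The main technical hurdle will be the very-general step in (2), namely pinning down that every irreducible component of $\Bminus(D_0)$ meets $\Gamma$ only in a proper closed subset, which amounts to ruling out $\Gamma$ itself as a component of $\Bminus(D_0)$ and handling separately the countably many lower-dimensional subvarieties.
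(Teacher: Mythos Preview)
Your proof is correct and follows essentially the same strategy as the paper's: part~(1) via Proposition~\ref{prop:definition} and Lemma~\ref{lem:nested}, part~(2) via $\sigma_\Gamma(D-\sigma_\Gamma(D)\Gamma)=0$ combined with Theorem~\ref{thm:main1}, and part~(3) via additivity of $\nu_{Y_\bullet}$ on effective divisors together with the fact that every effective representative of $D$ dominates $N_\sigma(D)$. The only differences are cosmetic: where the paper cites Nakayama directly (\cite{Nak}*{Lemma~2.1.5} for $\sigma_\Gamma(D_0)=0$ and \cite{Nak}*{III.1.14} for $D'\geq N_\sigma(D)$), you rederive these facts from subadditivity and the definition of $\sigma_\Gamma$, and you spell out the use of Proposition~\ref{prop:compute} in~(2) and the vanishing $\nu_{Y_\bullet}(N_\sigma(D))=\origin$ in the ``moreover'' clause of~(3), which the paper leaves implicit.
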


\begin{proof}[Proof of Theorem~\ref{thm:main3}]
For the duration of this proof we fix an ample divisor $A$. \\
$(1)$ This is equivalent to $\sigma_\Gamma(D)\leq \nu_1(D')$ for every effective $\RR$-divisor $D'\equiv D$. Fix a real number $\epsilon >0$, 
let $D''\sim_{\RR}D+\epsilon A$ is an effective $\RR$-divisor. Then 
\[
 \inf\{\mult_{\Gamma}(D')|D'\sim_{\RR}D+\epsilon A\} \dleq \mult_{\Gamma}(D'') \equ  \nu_1(D'')\ . 
\]
By Proposition~\ref{prop:definition}, this implies  the inclusion 
\[
\Delta_{Y_{\bullet}}(D+\epsilon A)\subseteq (\sigma'(D+\epsilon A),0,\ldots 0)+\RR^n_+ \ .
\]
Then   Lemma~\ref{lem:nested} and the definition of $\sigma_{\Gamma}(D)$ imply the claim.\\
$(2)$ By \cite{Nak}*{Lemma~2.1.5} we have   $\sigma_{\Gamma}(D-\sigma_{\Gamma}(D)\Gamma)=0$. Consequently, we obtain  
$\Gamma\nsubseteq\Bminus(D-\sigma(D)\Gamma)$. Because  $\Bminus(D-\sigma(D)\Gamma)$ is a countable union of subvarieties of $X$,  a very general point 
 $x$ lies outside   $\Bminus(D-\sigma_{\Gamma}(D)\Gamma)$. Theorem~\ref{thm:main1} yields $\origin\in\Delta_{\ybul}(D-\sigma_{\Gamma}(D)\Gamma)$, therefore
 the point $(\sigma_{\Gamma}(D),0\ldots, 0)$ is contained in $\Delta_{\ybul}(D)$. \\
$(3)$ Let $D_{\sigma}\sim_{\RR}P_{\sigma}(D)$ be an effective $\RR$-divisor,  then $D_{\sigma}+N_{\sigma}(D)\sim_{\RR} D$ is also an effective divisor 
for which  
\[
\nu_{Y_{\bullet}}(D_{\sigma}+N_{\sigma}(D)) \ = \ \nu_{Y_{\bullet}}(D_{\sigma})+\nu_{Y_{\bullet}}(N_{\sigma}(D)) \ .
\]
This implies the inclusion 
$\nu_{Y_{\bullet}}(N_{\sigma}(D)) + \Delta_{Y_{\bullet}}(P_{\sigma}(D))\subseteq \Delta_{Y_{\bullet}}(D)$ via Proposition~\ref{prop:definition}. 

For an effective $\RR$-divisor  $D'\sim_{\RR}D$,   \cite{Nak}*{III.1.14} gives that the divisor $D_{\sigma}=D'-N_{\sigma}(D)\sim_{\RR}P_{\sigma}(D)$ is
effective. Thus $\nu_{Y_{\bullet}}(D') \equ \nu_{Y_{\bullet}}(D_{\sigma})+\nu_{Y_{\bullet}}(N_{\sigma}(D))$, which completes the proof.
\end{proof}

Next, we study the variation of Zariski decomposition after Nakayama when varying the divisors inside the pseudo-effective cone. We start with the following lemma.
\begin{lemma}\label{lem:varying}
Suppose $D$ is a big $\RR$-divisor on $X$ and $E$-prime effective divisor. If $\sigma_E(D)=0$, then $\sigma_E(D-tE)=0$ for all $t\geq 0$.
\end{lemma}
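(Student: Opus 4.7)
The plan is to restrict $\sigma_E$ to the ray $\{D-sE \mid s\geq 0\}$ and use convexity to force the vanishing; the only external input is Nakayama's Lemma~2.1.5, already invoked in the proof of Theorem~\ref{thm:main3}(2). Set
\[
 s^{\star} \ :=\ \sup\{s\geq 0 \mid D-sE \text{ is pseudo-effective}\}.
\]
Since the pseudo-effective cone is closed and pointed, $s^{\star}$ is finite and attained, so $D-s^{\star}E$ is again pseudo-effective. Because $\sigma_E$ is only defined on the pseudo-effective cone, the content of the lemma is precisely that $\sigma_E(D-tE)=0$ for every $t\in[0,s^{\star}]$.

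Introduce $\phi(s):=\sigma_E(D-sE)$ on $[0,s^{\star}]$. The function $\phi$ is convex: $\sigma_E$ is subadditive and positively homogeneous on the pseudo-effective cone (see \cite{Nak}*{III.1.7, III.1.8}), hence convex, and $\phi$ is its restriction to an affine segment. Moreover $\phi\geq 0$ by the very definition of $\sigma_E$, and $\phi(0)=\sigma_E(D)=0$ by hypothesis.

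The crucial estimate is that $\phi(s)\leq s^{\star}-s$ for every $s\in[0,s^{\star}]$. To see it, apply Nakayama's Lemma~2.1.5 to the pseudo-effective class $D-sE$: it yields that $(D-sE)-\sigma_E(D-sE)\,E = D-(s+\phi(s))\,E$ is again pseudo-effective. The definition of $s^{\star}$ then forces $s+\phi(s)\leq s^{\star}$, i.e.\ $\phi(s)\leq s^{\star}-s$. In particular $\phi(s^{\star})\leq 0$, and combined with $\phi\geq 0$ this gives $\phi(s^{\star})=0$.

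A convex non-negative function on $[0,s^{\star}]$ that vanishes at both endpoints is identically zero: for any $t\in[0,s^{\star}]$, writing $t=(1-t/s^{\star})\cdot 0+(t/s^{\star})\cdot s^{\star}$ and applying convexity gives $\phi(t)\leq (1-t/s^{\star})\phi(0)+(t/s^{\star})\phi(s^{\star})=0$, hence $\phi(t)=0$. The only mild obstacle is ensuring that $\phi$ can be evaluated at the right endpoint, which is exactly what closedness of the pseudo-effective cone provides; this lets us bypass any continuity question for $\sigma_E$ at the boundary of the cone.
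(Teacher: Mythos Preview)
Your argument is correct and takes a genuinely different route from the paper.

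The paper's proof is an application of the Newton--Okounkov body machinery built up in Sections~2--4: it chooses an admissible flag $Y_\bullet$ with $Y_1=E$ and $x\in E$ very general, uses Theorem~\ref{thm:main1} to place the origin in $\Delta_{Y_\bullet}(D)$, then combines Theorem~\ref{thm:main3}(2) with the slicing formula of Proposition~\ref{prop:compute} to show $\origin\in\Delta_{Y_\bullet}(D-tE)$, whence $\sigma_E(D-tE)=0$ by Theorem~\ref{thm:main3}(1). By contrast, you bypass Newton--Okounkov bodies entirely and argue inside the pseudo-effective cone using only properties of $\sigma_E$ from \cite{Nak}: homogeneity and subadditivity give convexity of $\phi$, the fact that $(D-sE)-\sigma_E(D-sE)E$ remains pseudo-effective yields the upper bound $\phi(s)\leq s^\star-s$, and then convexity pinches $\phi$ to zero between its two roots. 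Your proof is more elementary and logically independent of the paper's main theorems; the paper's proof, on the other hand, is written precisely to exhibit Lemma~\ref{lem:varying} as a consequence of the convex-geometric picture, which is the point of Section~4. Both conclusions are naturally restricted to $t\in[0,s^\star]$, and the paper's use of Proposition~\ref{prop:compute} in fact only covers $t$ below the bigness threshold $\mu(D,E)$, so your range is at least as large.
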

\begin{proof}
The condition  $\sigma_E(D)=0$ implies  $E\nsubseteq \Bminus(D)$, thus,  by Theorem~\ref{thm:main1}, for a flag 
$Y_{\bullet}:X\supseteq E\supseteq \ldots\supseteq \{x\}$, with  $x\in E$ very general point, we have that 
$\origin\in \Delta_{Y_{\bullet}}(D)$. 

Again, by the very general choice of  $x\in E$, Theorem~\ref{thm:main3} says  that $\sigma_E(D-tE)\cdot \eone\in \Delta_{Y_{\bullet}}(D-tE)$. 
On the other hand, by Proposition~\ref{prop:compute} we know that 
$\Delta_{Y_{\bullet}}(D)_{\nu_1\geq t} = \Delta_{Y_{\bullet}}(D-tE)+t\eone$, therefore  $(\sigma_E(D-tE)+t)\eone\in \Delta_{Y_{\bullet}}(D)$.

By convexity, this implies $t\cdot \eone\in \Delta_{Y_{\bullet}}(D)$,  again by Proposition~\ref{prop:compute} we have 
$\origin\in \Delta_{Y_{\bullet}}(D-tE)$, hence  $\sigma_E(D-tE)=0$ by the choice of $x\in E$ and Theorem~\ref{thm:main3}. 
\end{proof}

The next proposition shows how the negative part of  the Zariski decomposition varies inside the big cone.

\begin{proposition}\label{prop:varying}
Suppose $D$ is a big $\RR$-divisor on $X$ and $E$ a prime effective divisor. Then
\begin{enumerate}
\item If $\sigma_{E}(D)>0$, then $N_{\sigma}(D-tE)=N_{\sigma}(D)-tE$, for any $t\in [0,\sigma_E(D)]$.
\item If $\sigma_E(D)=0$, then the function $t\rightarrow N_{\sigma}(D-tE)$ is an increasing function, i.e. for any $t_1\geq t_2$ the divisor $N_{\sigma}(D-t_1)-N_{\sigma}(D-t_2E)$ is effective.
\end{enumerate}
\end{proposition}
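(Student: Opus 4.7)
The plan is to derive (1) from the monotonicity of the negative part of the $\sigma$-decomposition, and to prove (2) by combining Lemma~\ref{lem:varying} with the subadditivity of the invariants $\sigma_{\Gamma}$.

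For (1), I would fix $t\in[0,\sigma_{E}(D)]$ and observe that, because the coefficient of $E$ in $N_{\sigma}(D)$ is exactly $\sigma_{E}(D)\geq t$, the divisor $N_{\sigma}(D)-tE$ is still effective, so $0\leq tE\leq N_{\sigma}(D)$ componentwise. Nakayama's theory (essentially \cite{Nak}*{III.1.14}, already invoked in the proof of Theorem~\ref{thm:main3}(3)) then supplies the monotonicity statement: whenever $0\leq N\leq N_{\sigma}(D)$ is effective, one has $N_{\sigma}(D-N)=N_{\sigma}(D)-N$ and $P_{\sigma}(D-N)=P_{\sigma}(D)$. Specializing to $N=tE$ finishes (1). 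As a more self-contained alternative in the Newton--Okounkov spirit of the paper, one can take an admissible flag $Y_{\bullet}$ with $Y_{1}=E$ centered at a very general $x\in E$: Theorem~\ref{thm:main3}(1)--(2) forces every point of $\Delta_{Y_{\bullet}}(D)$ to have first coordinate at least $\sigma_{E}(D)\geq t$, so Proposition~\ref{prop:compute} yields $\Delta_{Y_{\bullet}}(D-tE)=\Delta_{Y_{\bullet}}(D)-t\eone$. Reading off the leftmost first coordinate via Theorem~\ref{thm:main3} applied to $D-tE$ then gives $\sigma_{E}(D-tE)=\sigma_{E}(D)-t$.

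For (2), assume $\sigma_{E}(D)=0$ and fix $t_{1}\geq t_{2}\geq 0$ in the range where $D-t_{i}E$ is still big. Lemma~\ref{lem:varying} gives $\sigma_{E}(D-t_{i}E)=0$ for $i=1,2$, so the $E$-coefficients of the two negative parts vanish and agree. For any prime divisor $F\neq E$, I would write $D-t_{2}E=(D-t_{1}E)+(t_{1}-t_{2})E$ and apply Nakayama's subadditivity $\sigma_{F}(A+B)\leq\sigma_{F}(A)+\sigma_{F}(B)$; since $\mult_{F}\bigl((t_{1}-t_{2})E\bigr)=0$ for $F\neq E$, this yields $\sigma_{F}(D-t_{2}E)\leq\sigma_{F}(D-t_{1}E)$. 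Hence every coefficient of $N_{\sigma}(D-t_{1}E)-N_{\sigma}(D-t_{2}E)$ is nonnegative, proving effectivity.

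The genuine obstacle lies in (1): bypassing the Nakayama monotonicity black box and arguing purely from Newton--Okounkov bodies gives the inequality $\sigma_{F}(D-tE)\geq \sigma_{F}(D)$ for $F\neq E$ for free by subadditivity, but the reverse inequality does not follow from the flag with $Y_{1}=E$ used above. One would then need either a symmetric application of Theorem~\ref{thm:main3} along a flag with $Y_{1}=F$, or an approximation argument producing effective representatives of $D+\epsilon A$ with simultaneously small $F$-multiplicity and $E$-multiplicity at least $t$ --- both routine but slightly delicate. A minor secondary point is that $D-tE$ stays big throughout $[0,\sigma_{E}(D)]$, which is immediate from $D-\sigma_{E}(D)E=P_{\sigma}(D)+\bigl(N_{\sigma}(D)-\sigma_{E}(D)E\bigr)$ together with bigness of $P_{\sigma}(D)$.
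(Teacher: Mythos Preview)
Your argument is correct. For (1) you defer to Nakayama just as the paper does; a small quibble is that the monotonicity statement ``$0\leq N\leq N_{\sigma}(D)$ implies $N_{\sigma}(D-N)=N_{\sigma}(D)-N$'' is not what \cite{Nak}*{III.1.14} says (that reference only gives $D'-N_{\sigma}(D)\geq 0$ for effective $D'\equiv D$). The paper cites \cite{Nak}*{Lemma~1.8} for part (1), which is the right address. Your alternative Newton--Okounkov argument indeed only pins down the $E$-coefficient, and you correctly flag the missing reverse inequality for the other components as the real work.

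For (2) you take a genuinely different route. The paper stays inside its own framework: it fixes a flag $Y_{\bullet}$ with $Y_{1}=\Gamma$ centered at a very general $x\in\Gamma$ with $x\notin E$, uses Theorem~\ref{thm:main3} to place $\sigma_{\Gamma}(D-tE)\cdot\eone$ in $\Delta_{Y_{\bullet}}(D-tE)$, shows $\Delta_{Y_{\bullet}}(D-tE)\subseteq\Delta_{Y_{\bullet}}(D)$ by adding $tE$ to effective representatives (which leaves $\nu_{Y_{\bullet}}$ unchanged since $x\notin E$), and then reads off $\sigma_{\Gamma}(D-tE)\geq\sigma_{\Gamma}(D)$ from the containment $\Delta_{Y_{\bullet}}(D)\subseteq\sigma_{\Gamma}(D)\cdot\eone+\RR^{n}_{+}$. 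Your subadditivity argument is shorter and more elementary, and in fact unwinds to exactly the same ``add $tE$ back'' trick at the level of effective representatives rather than at the level of Okounkov bodies. What the paper's approach buys is that (2) becomes another illustration of Theorem~\ref{thm:main3}; what yours buys is that no Okounkov machinery is needed at all for this step.
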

\begin{proof}
$(1)$ This statement is proved in Lemma~1.8 from \cite{Nak}.

$(2)$ Since $\sigma_E(D)=0$, then Lemma~\ref{lem:varying} implies that $\sigma_E(D-tE)=0$ for any $t\geq 0$ and in particular $E\nsubseteq \textup{Supp}(N_{\sigma}(D-tE))$ for any $t\geq 0$. So, take $\Gamma\subseteq \textup{Supp}(N_{\sigma}(D-t_2E))$ a prime divisor. The goal is to prove that $\sigma_{\Gamma}(D-t_1E)\geq \sigma_{\Gamma}(D-t_2E)$. Without loss of generality, we assume that $t_2=0$ and $t_1=t\geq 0$ and we need to show that $\sigma_{\Gamma}(D-tE)\geq \sigma_{\Gamma}(D)$. Now take a flag $Y_{\bullet}:X\supseteq \Gamma\supseteq \ldots\supseteq\{x\}$, where $x\in \Gamma$ is a very general point and $x\notin E$. Then by Theorem~\ref{thm:main3} we have
\[
\sigma_{\Gamma}(D)\cdot \eone\in \Delta_{Y_{\bullet}}(D)\subseteq \sigma_{\Gamma}(D)\cdot \eone+\RR^n_+
\]
and 
\[
\sigma_{\Gamma}(D-tE)\cdot \eone\in \Delta_{Y_{\bullet}}(D-tE)\subseteq \sigma_{\Gamma}(D-tE)\cdot \eone+\RR^n_+.
\]
On the other hand, it is not hard to see that $\Delta_{Y_{\bullet}}(D-tE)\subseteq \Delta_{Y_{\bullet}}(D)$. For any $D'\sim_{\RR}D-tE$ effective $\RR$-divisor, the $\RR$-divisor $D'+tE\sim_{\RR}D$ is also effective. Since $x\notin E$, then $\nu_{Y_{\bullet}}(D')=\nu_{Y_{\bullet}}(D'+tE)$ and the inclusion follows naturally. Combining this and the above information we obtain that $\sigma_{\Gamma}(D-tE)\cdot \eone\in \Delta_{Y_{\bullet}}(D)$ and thus $\sigma_{\Gamma}(D-tE)\geq \sigma_{\Gamma}(D)$.
\end{proof}


\begin{bibdiv}
\begin{biblist}

\bib{AKL}{article}{
  label={AKL},
  author={Anderson, Dave},
  author={K\"uronya, Alex},
  author={Lozovanu, Victor},
  title={Okounkov bodies of finitely generated divisors},
  journal={International Mathematics Research Notices},
  volume={132},
  date={2013},
  number={5},
  pages={1205--1221},
}

\bib{BKS}{article}{
  label={BKS},
  author={Bauer, Thomas},
  author={K\"uronya, Alex},
  author={Szemberg, Tomasz},
  title={Zariski decompositions, volumes, and stable base loci},
  journal={Journal f\"ur die reine und angewandte Mathematik},
  volume={576},
  date={2004},
  pages={209--233},
}

\bib{Bou1}{article}{
  label={B1},
  author={Boucksom, S\'ebastien},
  title={Corps D'Okounkov},
  journal={S\'eminaire Bourbaki},
  volume={65},
  date={2012-2013},
  number={1059},
  pages={1--38},
}

\bib{Bou2}{article}{
   label={B2},
   author={Boucksom, S{\'e}bastien},
   title={Divisorial Zariski decompositions on compact complex manifolds},
   journal={Ann. Sci. \'Ecole Norm. Sup. (4)},
   volume={37},
   date={2004},
   number={1},
   pages={45--76},
   doi={10.1016/j.ansens.2003.04.002},
}

\bib{BKMS12}{article}{
   label={BKMS},
   author={Boucksom, S{\'e}bastien},
   author={K{\"u}ronya, Alex},
   author={Maclean, Catriona},
   author={Szemberg, Tomasz},
   title={Vanishing sequences and Okounkov bodies},
   journal={Math. Ann.},
   volume={361},
   date={2015},
   number={3-4},
   pages={811--834},
   doi={10.1007/s00208-014-1081-z},
}

\bib{ELMNP1}{article}{
   label={ELMNP1},
   author={Ein, Lawrence},
   author={Lazarsfeld, Robert},
   author={Musta{\c{t}}{\u{a}}, Mircea},
   author={Nakamaye, Michael},
   author={Popa, Mihnea},
   title={Asymptotic invariants of base loci},
   journal={Ann. Inst. Fourier (Grenoble)},
   volume={56},
   date={2006},
   number={6},
   pages={1701--1734},
}

\bib{ELMNP3}{article}{
   label={ELMNP2},
   author={Ein, Lawrence},
   author={Lazarsfeld, Robert},
   author={Musta{\c{t}}{\u{a}}, Mircea},
   author={Nakamaye, Michael},
   author={Popa, Mihnea},
   title={Restricted volumes and base loci of linear series},
   journal={Amer. J. Math.},
   volume={131},
   date={2009},
   number={3},
   pages={607--651},
   doi={10.1353/ajm.0.0054},
}

\bib{Jow}{article}{
   label={J},
   author={Jow, Shin-Yao},
   title={Okounkov bodies and restricted volumes along very general curves},
   journal={Adv. Math.},
   volume={223},
   date={2010},
   number={4},
   pages={1356--1371},
   doi={10.1016/j.aim.2009.09.015},
}

\bib{KKh}{article}{
   label={KKh},
   author={Kaveh, Kiumars},
   author={Khovanskii, A. G.},
   title={Newton-Okounkov bodies, semigroups of integral points, graded
   algebras and intersection theory},
   journal={Ann. of Math. (2)},
   volume={176},
   date={2012},
   number={2},
   pages={925--978},
   doi={10.4007/annals.2012.176.2.5},
}

\bib{K}{article}{
   label={K},
   author={Kleiman, Steven L.},
   title={Bertini and his two fundamental theorems},
   note={Studies in the history of modern mathematics, III},
   journal={Rend. Circ. Mat. Palermo (2) Suppl.},
   number={55},
   date={1998},
   pages={9--37},
}

\bib{KL}{article}{
   label={KL},
   author={K\"{u}ronya, Alex},
   author={Lozovanu, Victor},
   title={Local positivity of linear series on surfaces},
   note={arXiv:1411.6205},
   date={2014},
}

\bib{KLM1}{article}{
   label={KLM},
   author={K{\"u}ronya, Alex},
   author={Lozovanu, Victor}, 
   author={Maclean, Catriona},
   title={Convex bodies appearing as Okounkov bodies of divisors},
   journal={Adv. Math.},
   volume={229},
   date={2012},
   number={5},
   pages={2622--2639},
   doi={10.1016/j.aim.2012.01.013},
}

\bib{LM}{article}{
   label={LM},
   author={Lazarsfeld, Robert},
   author={Musta{\c{t}}{\u{a}}, Mircea},
   title={Convex bodies associated to linear series},
   journal={Ann. Sci. \'Ec. Norm. Sup\'er. (4)},
   volume={42},
   date={2009},
   number={5},
   pages={783--835},
}

 \bib{Les}{article}{
   label={L},
   author={Lesieutre, John},
   title={The diminished base locus is not always closed},
   journal={Compos. Math.},
   volume={150},
   date={2014},
   number={10},
   pages={1729--1741},
   doi={10.1112/S0010437X14007544},
}

\bib{LSS}{article}{
   label={LSS},
   author={Patrycja \L uszcz-\'Swidecka}, 
   author={David Schmitz},
   title={Minkowski decomposition of Okounkov bodies on surfaces},
    JOURNAL = {J. Algebra},
    VOLUME = {414},
      YEAR = {2014},
     PAGES = {159--174},
}

\bib{Nak}{book}{
   label={N},
   author={Nakayama, Noboru},
   title={Zariski-decomposition and abundance},
   series={MSJ Memoirs},
   volume={14},
   publisher={Mathematical Society of Japan},
   place={Tokyo},
   date={2004},
   pages={xiv+277},
}

\bib{PAGI}{book}{
  label={PAG1},
  author={Lazarsfeld, Robert},
  title={Positivity in algebraic geometry. I},
  series={Ergebnisse der Mathematik und ihrer Grenzgebiete. 3. Folge.},
  volume={48},
  publisher={Springer-Verlag},
  place={Berlin},
  date={2004},
}

\bib{PAGII}{book}{
  label={PAG2},
  author={Lazarsfeld, Robert},
  title={Positivity in algebraic geometry. II},
  series={Ergebnisse der Mathematik und ihrer Grenzgebiete. 3. Folge.},
  volume={49},
  publisher={Springer-Verlag},
  place={Berlin},
  date={2004},
}

\bib{Ok}{article}{
   label={O},
   author={Okounkov, Andrei},
   title={Brunn-Minkowski inequality for multiplicities},
   journal={Invent. Math.},
   volume={125},
   date={1996},
   number={3},
   pages={405--411},
   doi={10.1007/s002220050081},
}

\bib{PSU}{article}{
 label={PSU},
 author={Pokora, Piotr},
 author={Schmitz, David},
 author={Urbinati, Stefano},
 title={Minkowski decomposition and generators of the moving cone for toric varieties},
 note={arXiv:1310.8505},
 date={2013},
}

\bib{Roe}{article}{
 label={R},
 author={Ro\'e, Joaquim},
 title={Local positivity in terms of Newton--Okounkov bodies},
 note={arXiv:1505.02051},
 date={2015},
}

\end{biblist}
\end{bibdiv}

\raggedright

\end{document}